\documentclass[article]{siamart1116}
\usepackage{amsmath}
\usepackage{float}
\usepackage{diagbox}
\usepackage{bbold}
\usepackage{hyperref}


\newtheorem{lem}{Lemma}

\newtheorem{rmk}{Remark}

\newcommand{\be}{\begin{equation}}
\newcommand{\ee}{\end{equation}}

\newcommand{\Dt}{\Delta t}

\newcommand{\dx}{\Delta x}
\newcommand{\dt}{\Delta t}

\newcommand{\m}[1]{\mathbf{#1}}
\newcommand{\mA}{\m{A}}

\newcommand{\mAh}{\hat{\m{A}}}
\newcommand{\mRh}{\hat{\m{R}}}
\newcommand{\mD}{\m{D}}

\newcommand{\mT}{\m{T}}
\newcommand{\mR}{\m{R}}
\newcommand{\mhA}{\m{\hat{A}}}
\newcommand{\mhR}{\m{\hat{R}}}

\newcommand{\mI}{\m{I}}


\renewcommand{\v}[1]{\boldsymbol{#1}}

\newcommand{\vc}{\v{c}}

\newcommand{\ve}{{\mathbb{1}}}

\newcommand{\vy}{\v{y}}

\newcommand{\vt}{\v{t}}

\newcommand{\sspcoef}{\mathcal{C}}

\newcommand{\DtFE}{\Dt_{\textup{FE}}}

\newcommand{\dF}{\dot{F}}

\newcommand{\ste}{\boldsymbol{\tau}}

\renewcommand{\v}[1]{\mathbf{#1}}

\title{Two-derivative  error inhibiting schemes with post-processing}

\author{%
Adi Ditkowski\thanks{School of Mathematical Sciences, Tel Aviv University, Tel Aviv 69978, Israel. 
Email: adid@post.tau.ac.il} \and
Sigal Gottlieb\thanks{Mathematics Department, University of Massachusetts Dartmouth, 285 Old Westport Road,
North Dartmouth MA 02747. Email: sgottlieb@umassd.edu} \and
Zachary J. Grant\thanks{Department of Computational and Applied Mathematics, Oak Ridge National Laboratory, Oak Ridge TN 37830. Email: grantzj@ornl.gov} 
}

\begin{document}
\maketitle


\bibliographystyle{siam}

\begin{abstract}  
High order methods are often desired for the evolution of ordinary differential equations,
 in particular those arising from the semi-discretization of partial differential equations.
In prior work in we  investigated the interplay between the local truncation error and 
the global error  to construct {\em error inhibiting } general linear methods (GLMs)
that control  the accumulation of the local truncation error over time. Furthermore we defined sufficient
conditions that allow us to post-process the final solution and obtain a solution that is 
two orders of accuracy higher than expected from truncation error analysis alone.
In this work we extend this theory to the class of two-derivative GLMs. 
We define sufficient conditions that control the growth of the error so that the solution is
one order higher than expected from truncation error analysis, and  furthermore define the
construction of a simple post-processor that will extract  an additional order of accuracy.
Using these conditions as constraints, we
develop an optimization code that  enables us to find explicit two-derivative methods up to eighth order
that have favorable stability regions, explicit strong stability preserving methods up to 
seventh order,  and A-stable implicit methods up to fifth order.
We numerically verify the order of convergence of a selection of these methods, and the
total variation diminishing performance of some of the SSP methods.
We confirm that the methods found perform as predicted by the theory developed herein.
 \end{abstract}

\section{Introduction\label{sec:intro}}

The celebrated Lax-Richtmeyer  equivalence theorem (\cite{lax1956survey}, \cite{gustafsson1995time}, \cite{quarteroni2010numerical}) 
states that if the numerical scheme is stable then its global error is {\em at least } 
of the same order as its local truncation error. 
Indeed, it is the common experience that the frequently used  methods    
produce solutions that have global error of the same order as the normalized local truncation error. 
Indeed, the Dahlquist’s Equivalence Theorem \cite{Suli2003} states this fact
for linear multistep methods.
In fact, this behavior is so expected for all one-step and multistep methods
that we typically  define the order of a stable numerical method 
solely by the order conditions derived by Taylor series analysis of  the local truncation error. 
This relationship between the order of the  local truncation error and global error is also seen in finite difference schemes for partial 
differential equations (PDEs) \cite{gustafsson1995time,quarteroni2010numerical}.
Work over the past decade reminds us that,  while for a stable scheme the global error must  {\em at least } of the same order as its local truncation error, 
 it may in fact be of higher order.
 
In this paper we consider numerical solvers for  ordinary differential equations (ODEs) of the form
\begin{eqnarray*}
& & u_t =   F(t,u)  \;,\;\;\;\;\;  t \ge 0  \\
& & u(t_0) =u_0 .\; \nonumber
\end{eqnarray*}
(where we assume  $F(t,u)$ is at least continuous with respect to $t$ 
and Lipschitz-continuous with respect to $u$, and typically much smoother).
Without loss of generality \cite{JackiewiczBook}, we focus our attention on the autonomous ODE 
\begin{eqnarray}\label{ODE}
& & u_t =   F(u)  \;,\;\;\;\;\;  t \ge 0  \\
& & u(t_0) =u_0 .\; \nonumber
\end{eqnarray}

The canonical numerical method for this problem is the  forward Euler method
\[ v_{n+1} = v_n + \Delta t F( v_n) \; , \] 
where $v_n$ approximates the exact solution at time $t_n$ and we let $v_0=u_0$. 
The forward Euler method has local truncation error $LTE^n$ and its approximation error  $\tau^n$ at any given time 
$t_n$ defined by
\[ \tau^n  = \dt LTE^n =   u(t_{n-1})  +  \Delta t F(u(t_{n-1}) )  -   u(t_{n}) \approx O(\Delta t^2) \]
and it produces a global error  which is first order accurate
\[ e^n = v_n - u(t_n)   \approx O(\Delta t) .\]
To develop methods that have higher order, 
we can add steps and define a linear multistep method \cite{butcher2008numerical},
use multiple stages as in a  Runge--Kutta methods \cite{butcher2008numerical}, 
or  include higher derivatives as we do in Taylor series methods \cite{butcher2008numerical}.
We can also combine the approaches above: the combination of multiple steps and stages is commonly see as in the  general linear methods described in \cite{butcher1993a,JackiewiczBook}, and multiple derivatives and stages have also been used 
\cite{tsai2010, MSMD, MSMD-TS, mitsui1982, Nguyen-Ba2010, ono2004, sealMSMD2014}. 
In all these cases the goal is  to increase the order of the local truncation
error and therefore of the global error. All these approaches typically produce methods that have
 global errors that are  of the same order as their local truncation errors.

It is, however, possible in some cases to devise schemes that have global errors that are {\em higher order} 
than predicted by  the local truncation errors.  Kulikov (for Nordsiek methods)  \cite{Kulikov2009} and by 
Weiner and colleagues \cite{WeinerSchmitt2009} (for  explicit two-step peer methods)
  followed along the lines of the quasi-consistency theory first introduced by Skeel in 1978 \cite{Skeel1978} to find methods that have solution of order $p+1$ 
  although their truncation errors are of order $p$. 
In  \cite{EISpaper1} Ditkowski and Gottlieb derived sufficient conditions for a family of
 general linear methods (GLMs)   of any number of steps and stages 
under which one can control the accumulation of the local truncation error over time evolution and produce methods that are one order higher than 
expected from the truncation error alone. A similar theory was developed  for implicit-explicit methods in \cite{SoleimaniWeiner2017}.
In \cite{KulikovWeiner2010}, \cite{KulikovWeiner2012}, and \cite{KulikovWeiner2018}, it was shown that under additional conditions on the method
the precise form of the first surviving term in the global error (the vector multiplying $\Delta t^{p+1} $)  can be computed 
explicitly and leveraged for error estimation.  In \cite{EIS2pp} Ditkowski, Gottlieb, and Grant showed that under less restrictive conditions the form of the 
first surviving term in the global error can be computed explicitly and exploited to enhance the accuracy to 
order $p+2$, using post-processing.

In this paper, we extend the error inhibiting approach to methods that have multiple steps, multiple stages, and two derivatives. 
In Section \ref{sec:mdEIS} we show that the theory developed in \cite{EIS2pp} carries through easily to such methods, 
and by imposing the same EIS+ conditions as in  \cite{EIS2pp}  on
on the two-derivative GLMs we consider  we can not only produce solutions with accuracy one order higher than predicted by truncation error analysis but also
precisely describe the leading  coefficients of the error term. This allows us to post-process the final time solution and obtain 
a global error that is {\em two orders higher} than the local truncation error. The construction of the post-processor is exactly 
the same as in \cite{EIS2pp} and we review it in Section \ref{sec:postproc}.
We proceed to devise error inhibiting two-step GLMs this new EIS approach, present these methods in Section \ref{newEISMDmethods},
 and in Section \ref{sec:test} we test a selection of these methods on  numerical examples 
to demonstrate their enhanced accuracy properties.

 \section{Two-derivative methods}
 Multiderivative Runge--Kutta methods, particularly two-derivative methods, were first considered in 
\cite{obreschkoff1940,Tu50,StSt63,shintani1971,shintani1972,KaWa72,KaWa72-RK,
mitsui1982,ono2004, tsai2010},  
and later  explored for use with  partial differential equations (PDEs) 
\cite{sealMSMD2014,tsai2014, LiDu2016a,LiDu2016b,LiDu2018}.
Recent interest in exploring the strong-stability properties of multiderivative Runge--Kutta methods,
(also known as multistage multiderivative methods) is evident in \cite{MSMD, MSMD-TS,Nguyen-Ba2010}.
A 2015 work by Okuonghae and Ikhile \cite{Okuonghae2015} discussed two- and three-derivatives 
methods with multiple stages and steps. 
These  two-derivative general linear methods (GLMs) are similar to those in this work.
{\em (Also cite the paper Zack refereed and look in there for references)}

  \subsection{Essentials}  \label{sec:essentials}
 
 In this work we consider the class of two-derivative general linear methods (GLMs) of the form
 \begin{equation} \label{2Dmethods}
V^{n+1} = \mD V^n +  \Delta t \mA F( V^n) +  \Delta t \mR F( V^{n+1})
+  \Delta t^2 \mhA \dF( V^n)  +  \Delta t^2 \mhR \dF( V^{n+1}), 
\end{equation}
where $V^n$ is a  vector of length $s$ that contains the numerical solution
at times  $\left( t_n+ c_j \Delta t \right)$ for $j=1,\ldots,s$:
\begin{equation}\label{multistep_v}
V^n = \left(v(t_n+ c_1 \Delta t)) ,  v(t_{n} + c_2 \Delta t)  , \ldots ,v(t_{n} + c_{s} \Delta t )  \right )^T .
\end{equation}
The function $F(V^n)$ is defined as the component-wise function evaluation on the vector $V^n$:
\begin{equation}\label{multistep_F}
F(V^n) = \left( F( v(t_{n} + c_1 \Delta t) ) , F( v(t_{n} + c_2 \Delta t) ),  \ldots , F(v(t_{n} + c_{s}  \Delta t ) ) \right )^T ,
\end{equation}
and, similarly, the function $\dF(V^n)$ is 
\begin{equation}\label{multistep_dF}
\dF(V^n) = \left( \frac{dF}{dt} ( v(t_{n} + c_1 \Delta t) ) , \frac{dF}{dt} ( v(t_{n} + c_2 \Delta t) ),  \ldots , \frac{dF}{dt} (v(t_{n} + c_{s}  \Delta t ) ) \right )^T .
\end{equation}
For convenience, we select $c_1 =0$ so that the first element in the vector  $V^n$
approximates the solution at time $t_n$, and the abscissas are  non-decreasing 
  $c_1 \leq c_2 \leq ... \leq c_s$. 
To initialize these methods, we define the  first element in the initial solution 
vector $V^0_1= u(t_0)$ and the remaining elements  $v(t_0+ c_j \Delta t) $
are computed using some highly accurate method.

We define the corresponding projection of the exact solution of the ODE \eqref{ODE} onto the temporal grid:
\begin{equation}\label{multistep_u}
U^n = \left(  u(t_{n} + c_1 \Delta t)  , u(t_{n} + c_2 \Delta t) ,  \ldots ,u(t_{n} + c_{s}  \Delta t )  \right )^T ,
\end{equation}
where  $F(U^n)$ and $\dF(U^n)$ are  the component-wise function evaluation on the vector $U^n$.
We define the {\em global error}  as the difference between the
vectors of the exact and the numerical solutions at some time $t_n$
\begin{equation}\label{E-globalerror}
E^n = V^n - U^n .
\end{equation}

Methods of the form \eqref{2Dmethods} have a local truncation error at time $t_n$ that is related to 
the approximation error $ \ste^n$
\begin{equation} \label{localerrors}
 \Delta t \; LTE^n = \ste^n = 
\left[ \mD U^{n-1} +  \Delta t \mA F( U^{n-1}) +  \Delta t \mR F( U^{n}) \right]  -U^{n} 
 \end{equation}
where
\begin{equation} 
 \ste^n = \sum_{j=0}^{\infty} \ste^n_j  \Delta t^{j}   =
 \sum_{j=0}^{\infty} \ste_j  \Delta t^{j}  \left. \frac{d^j u} {dt^j} \right|_{t=t_n}   
  \end{equation}
where the truncation error vectors $\ste_j$ have the form
 \begin{subequations} \label{tau_j}
\begin{eqnarray} 
\ste_0 & = &  \left(  \mD -  \mI \right) \ve  \label{consistency_cond}\\
\ste_j &=&  \frac{1}{(j-1)!} \left(\frac{1}{j} \mD (\vc-\ve)^j  +\mA (\vc-\ve)^{j-1}  + (j-1) \mAh (\vc-\ve)^{j-2} \right.   \label{order_cond}\\
&+& \left.  \mR \vc^{j-1} + (j-1) \mRh  \vc^{j-2} -  \frac{1}{j} \vc^j    \right) \; \; \; \; \mbox{for j=1,2, ...}  . \nonumber
  \end{eqnarray}
  \end{subequations}
  Here, $\vc= (c_1, c_2, . . . , c_{s})^T$ is the vector of abscissas and $\ve = (1,1, . . . , 1)^T$ is the vector of ones, and
the terms $\vc^j$ are understood component-wise  $\vc^j = (c_1^j, c_2^j, . . . , c^j_{s})^T$.
For a method to have order $p$ it must satisfy the order conditions
\[ \ste_j = 0 \; \; \; \; \; j \leq p.\] 
 
Note that the form  \eqref{2Dmethods} includes  both explicit and  implicit schemes, as 
$V^{n+1}$ appears on both sides of the equation. However, if 
$\mR$ and $\mRh$ are both  strictly lower triangular the scheme is explicit.
We are only interested in zero-stable methods. A sufficient condition for this is 
that the coefficient matrix $\mD $ is a rank one matrix that has row sum one (i.e. satisfies the consistency condition).
For simplicity we assume this to be the case in this the remainder of this work.

\subsection{Preliminaries} \label{sec:preliminaries}

In this subsection we make some observations that will be useful in the remainder of the paper.
In all the following we assume sufficient smoothness of all the quantities such as $F, \dF$ etc. 
Furthermore, we assume that he order conditions  $\ste_j =0$ hold for all $j=0, ..., p$, and that we 
are in the asymptotic regime so that the errors are small and we can say 
that $\| E^n \| \leq O(\dt^p) <<1$.\footnote{We will verify that this is reasonable in Lemma \ref{Lemma3}}

\noindent{\bf Observation 1} We observe that
\begin{equation}
F(U^n + E^n)   =  F(U^n)  + F_y^n E^n  + O(\dt)  E^n ,
\end{equation}
where $F_y^n = F_y(u(t_n))$.

\begin{proof} 
This is simply due to the fact that $F$ is smooth enough that it can be expanded:
\begin{align*}
F(U^n + E^n) & =    F(U^n) + 
\left(
\begin{array}{l}
F_y(u(t_n+ c_1 \Delta t)) e_{n+c_1} \\
F_y(u(t_n+ c_2 \Delta t)) e_{n+c_2} \\
\vdots \\
F_y(u(t_n+ c_{s} \Delta t)) e_{n+c_{s}} \\
\end{array}
\right)  + O\left( \|E^n\|^2 \right), \\
 & =    F(U^n) + 
\left(
\begin{array}{cccc}
F_y^{n+ c_1 }  & 0 & \cdots & 0 \\
0 & F_y^{n+ c_2 } & \cdots & 0 \\
\vdots  & \vdots & \ddots & \vdots \\
0 & 0 & \cdots &  F_y^{n+ c_{s} } \\
\end{array}
\right) E^n  + O\left( \|E^n\|^2 \right), \\
\end{align*}
where the error vector is $E^n = \left( e_{n+c_1} , e_{n+c_2} , . . . , e_{n+c_s} \right)^T$,
and we use the notation $F_y^{n+c_j} = F_y(u(t_n+ c_j \Delta t)) $.
Each term can be expanded as
\[
 F_y^{n+ c_j  }  =  F_y(u(t_n+ c_j \Delta t))  = F_y(u(t_n)) +  c_j \Delta t F_{yy}(u(t_n))  + O(\dt^2)
\]
so that, exploiting the smoothness of $F_yy$ we have
\begin{align*}
F(U^n + E^n)  & =    F(U^n)  + \left( F_y^n I + O(\dt) \right) E^n + O\left( \|E^n\|^2 \right) , 
\end{align*}
which, due to the fact that $\|E^n\| \leq O( \dt^p)$ becomes
\begin{align*}
F(U^n + E^n)   & =  F(U^n)  + F_y^n E^n  + O(\dt)  E^n .
\end{align*}

\end{proof}

\noindent{\sc Observation 2} {\em  Given the smoothness of $\dF$, 
we observe that
\begin{equation}
\dF(U^n + E^n)   =  \dF(U^n)  + \dF_y^n E^n  + O(\dt)  E^n,
\end{equation}
where $\dF_y^n = \dF_y(u(t_n))$.}

\begin{proof} 
The proof of this observation is exactly the same as above, with the assumption that $\dF$ is smooth enough to be expanded.
\end{proof} 

We use these observations to develop an equation that describes the growth of the error:
\begin{lem} \label{Lemma1}
Given a zero-stable method of the form \eqref{2Dmethods} which satisfies the order conditions 
\[ \ste_{j} = 0 \; \; \; \; \mbox{for} \; \; j=0, . . . , p \]
and where the functions $F$ and $\dF$ are smooth, the evolution of the error can be described by:
  \begin{eqnarray} \label{error_evolution}
\left( I - \Delta t \mR  F_y^{n}  +   O(  \Delta t^2 )  \right) E^{n+1}&=&\left(  \mD +  \Delta t \mA  F_y^n + O(\Delta t^2 )  \right) E^n  
 +      \ste^{n+1} .
 \end{eqnarray} 
\end{lem}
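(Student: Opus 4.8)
\emph{Proof plan.} The idea is to subtract the identity satisfied by the projected exact solution from the scheme \eqref{2Dmethods} and then linearize the resulting nonlinear differences using Observations 1 and 2. First I would write out the defining relation for $\ste^{n+1}$ associated with \eqref{2Dmethods}, retaining all five terms (consistently with the order conditions \eqref{tau_j}, which involve $\mhA$ and $\mhR$):
\begin{equation*}
U^{n+1} + \ste^{n+1} = \mD U^n + \Delta t \mA F(U^n) + \Delta t \mR F(U^{n+1}) + \Delta t^2 \mhA \dF(U^n) + \Delta t^2 \mhR \dF(U^{n+1}),
\end{equation*}
and subtract it from \eqref{2Dmethods}. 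Writing $E^n = V^n - U^n$, this yields
\begin{align*}
E^{n+1} - \ste^{n+1} &= \mD E^n + \Delta t \mA\bigl(F(V^n)-F(U^n)\bigr) + \Delta t \mR\bigl(F(V^{n+1})-F(U^{n+1})\bigr) \\
&\qquad + \Delta t^2 \mhA\bigl(\dF(V^n)-\dF(U^n)\bigr) + \Delta t^2 \mhR\bigl(\dF(V^{n+1})-\dF(U^{n+1})\bigr).
\end{align*}

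Next I would expand the four nonlinear differences. Since $V^m = U^m + E^m$, Observation 1 gives $F(V^n)-F(U^n) = F_y^n E^n + O(\Delta t) E^n$ and $F(V^{n+1})-F(U^{n+1}) = F_y^{n+1} E^{n+1} + O(\Delta t) E^{n+1}$; smoothness of $F$ then gives $F_y^{n+1} = F_y(u(t_{n+1})) = F_y^n + O(\Delta t)$, so $F(V^{n+1})-F(U^{n+1}) = F_y^n E^{n+1} + O(\Delta t) E^{n+1}$. Observation 2 treats the two $\dF$ differences in exactly the same way, producing $\dF_y^n E^n + O(\Delta t)E^n$ and $\dF_y^n E^{n+1} + O(\Delta t)E^{n+1}$. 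Substituting all four expansions and using $\Delta t\cdot O(\Delta t) = O(\Delta t^2)$, every $F$- and $\dF$-difference becomes a fixed matrix times the corresponding error vector plus an $O(\Delta t^2)$ multiple of that error vector.

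Finally I would gather the $E^{n+1}$ terms on the left and the $E^n$ terms on the right. The left side becomes $E^{n+1} - \Delta t \mR F_y^n E^{n+1} - \Delta t^2 \mhR \dF_y^n E^{n+1} - \Delta t \mR\, O(\Delta t)E^{n+1}$, whose last two terms are $O(\Delta t^2)E^{n+1}$; the right side becomes $\mD E^n + \Delta t \mA F_y^n E^n + \Delta t^2 \mhA \dF_y^n E^n + \Delta t \mA\, O(\Delta t)E^n + \ste^{n+1}$, whose two two-derivative/remainder pieces are $O(\Delta t^2)E^n$. This is precisely \eqref{error_evolution}. The computation is essentially bookkeeping; the only points that need care are that the two-derivative terms enter one power of $\Delta t$ higher than the $F$-terms and hence disappear into the $O(\Delta t^2)$ remainders, that evaluating the Jacobians at $t_{n+1}$ rather than $t_n$ costs only a further $O(\Delta t)$, and that we work in the asymptotic regime $\|E^n\| \le O(\Delta t^p) \ll 1$ so the remainder operators are uniformly bounded (and, for $\Delta t$ small, $I - \Delta t \mR F_y^n + O(\Delta t^2)$ is invertible, which is what makes \eqref{error_evolution} a genuine recursion for $E^n$).
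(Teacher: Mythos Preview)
Your proof is correct and follows essentially the same route as the paper: subtract the exact-solution identity defining $\ste^{n+1}$ from the scheme, linearize each nonlinear difference via Observations 1 and 2, absorb the $\Delta t^2\mhA$ and $\Delta t^2\mhR$ contributions into the $O(\Delta t^2)$ remainders, and shift $F_y^{n+1}$ to $F_y^n$ at the cost of another $O(\Delta t)$. Your explicit remark that the defining relation for $\ste^{n+1}$ must include the $\mhA,\mhR$ terms (consistently with \eqref{tau_j}) is well placed, since the displayed definition \eqref{localerrors} in the paper omits them while the proof itself uses the full expression.
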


\begin{proof}
Recall that
\begin{eqnarray*}
\ste^{n+1} = \mD U^n +  \Delta t \mA F(U^n)  + \Delta t^2 \mhA \dF(U^n)   +
\Delta t \mR F(U^{n+1})  + \Delta t^2 \mhR \dF(U^{n+1}) - U^{n+1}.
\end{eqnarray*} 
and use  this equation to subtract $U^{n+1}$ from 
\begin{eqnarray*}
V^{n+1} = \mD V^n +  \Delta t \mA F(V^n)  + \Delta t^2 \mhA \dF(V^n)   +
\Delta t \mR F(V^{n+1})  + \Delta t^2 \mhR \dF(V^{n+1}) 
\end{eqnarray*} 
obtaining
\begin{eqnarray*}
V^{n+1} -U^{n+1} &=& \mD V^n  - \mD U^n  +  \Delta t \mA  \left( F(V^n)   - F(U^n) \right)  \\
&&+   \Delta t^2 \mAh  \left( \dF(V^{n})   -  \dF(U^{n}) \right)   +\Delta t \mR \left( F(V^{n+1})    -   F(U^{n+1})   \right)   \\
 && +  \Delta t^2 \mRh  \left(  \dF(V^{n+1})    -  \dF(U^{n+1})  \right)  + \ste^{n+1} 
  \end{eqnarray*} 
  using Observations 1 and 2 we have
  \begin{eqnarray*}
E^{n+1} &=& \mD (U^n+ E^n)    - \mD U^n  
+  \Delta t \mA  \left( F(U^n+E^n)   - F(U^n) \right)  \\
&&+   \Delta t^2 \mAh  \left( \dF(U^{n}+E^n)   -  \dF(U^{n}) \right)   +\Delta t \mR \left( F(U^{n+1}+E^{n+1})    -   F(U^{n+1})   \right)   \\
 && +  \Delta t^2 \mRh  \left(  \dF(U^{n+1}+E^{n+1})    -  \dF(U^{n+1})  \right)  + \ste^{n+1}  \\
&=&  \mD E^n +  \Delta t  \left(  \mA F_y^n + O(\dt)  \right) E^n + \Delta t^2  \left( \dF_y^n  \mAh + O(\dt)   \right)  E^n   \\
&&  +  \Delta t \left( \mR F_y^{n+1} + O(\dt) \right)  E^{n+1}  +  \Delta t^2 \left(   \dF_y^{n+1}   \mRh + O(\dt)   \right)  E^{n+1} + \ste^{n+1}  ,
  \end{eqnarray*} 
so that
  \begin{eqnarray*}
 E^{n+1}&=& \mD E^n  +  \Delta t \mA  F_y^n E^n  +  \Delta t \mR  F_y^{n+1} E^{n+1}
 +  \Delta t^2 \mAh \dF_y^n E^n  +  \Delta t^2 \mRh  \dF_y^{n+1} E^{n+1}  \\
 && + O(\dt^2) E^n +  O(\dt^2) E^{n+1} + \ste^{n+1} .
 \end{eqnarray*} 
 Note that  $F_y^n, \dF_y^n, F_y^{n+1},$ and $\dF_y^{n+1}$ are all scalars.
 Now recall that $ O(\dt) O(\|E^n\|) < O(\dt^2)$ since $p \geq 1$, and that terms of the form $\mRh F^{n+1}_y$
 and $\mAh F^{n}_y$ are all bounded, so we have 
   \begin{eqnarray*}
 E^{n+1}&=& \mD E^n  +  \Delta t \mA  F_y^n E^n  +  \Delta t \mR  F_y^{n+1} E^{n+1} +  O(\Delta t^2)  E^n  +  O(\Delta t^2) E^{n+1} .
 \end{eqnarray*}
Now move the $E^{n+1}$ terms to the left side:
 \begin{eqnarray*}
\left( I - \Delta t \mR  F_y^{n+1}  +   O(  \Delta t^2 )  \right) E^{n+1}&=& \mD E^n  +  \Delta t \mA  F_y^n E^n  
 +  O(\Delta t^2 )    + \ste^{n+1} .
 \end{eqnarray*} 
 Noting that $ F_y^{n+1} =  F_y^{n} +O(\dt)$ we have  the desired result
\[\left( I - \Delta t \mR  F_y^{n}  +   O(  \Delta t^2 )  \right) E^{n+1} =\left(  \mD +  \Delta t \mA  F_y^n + O(\Delta t^2 )  \right) E^n  
 +      \ste^{n+1} .
\]
\end{proof}
 
 We now turn to verifying that the assumption we made in the first paragraph of this subsection, that 
 we can assume that the error $E^n$  is small:
\begin{lem} \label{Lemma3}
Given a zero-stable  scheme \eqref{2Dmethods}, if the order conditions
$\ste_j=0 $ are satisfied for $j=0, . . ., p$, and  $\mR$ and $F_y^n$ are bounded,
 then there is  some time interval [0,T]  such that the error $E^n$ (given by  \eqref{error_evolution} )
satisfies
\[ \| E^n \|  \leq O(\dt^p) <<1.\]
\end{lem}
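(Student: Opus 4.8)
The plan is to close a discrete Gronwall-type estimate on the error recursion \eqref{error_evolution} from Lemma \ref{Lemma1}, treating it as a perturbation of the zero-stable iteration $E^{n+1} = \mD E^n + \ste^{n+1}$. First I would invert the matrix on the left-hand side: since $\mR$ and $F_y^n$ are bounded, for $\dt$ small enough $\left(I - \Delta t \mR F_y^n + O(\Delta t^2)\right)$ is nonsingular with inverse $I + O(\dt)$, so \eqref{error_evolution} can be rewritten as
\begin{equation*}
E^{n+1} = \left(\mD + \Delta t B^n\right) E^n + \left(I + O(\dt)\right)\ste^{n+1},
\end{equation*}
where $B^n = \mA F_y^n + O(\dt)$ collects all the bounded first-order-in-$\dt$ corrections, so $\|B^n\| \le C_1$ uniformly on the interval of interest. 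Because the method is zero-stable, $\mD$ is (assumed here to be) a rank-one matrix with row sum one, hence power-bounded: there is a constant $K$ with $\|\mD^k\| \le K$ for all $k \ge 0$. I would also record that, since the order conditions hold through order $p$, the approximation error satisfies $\|\ste^{n+1}\| \le C_2 \dt^{p+1}$ (the first surviving term is $\ste_{p+1}\dt^{p+1} \frac{d^{p+1}u}{dt^{p+1}}$, which is $O(\dt^{p+1})$ under the smoothness assumptions).

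Next I would unroll the recursion: writing $M^n = \mD + \dt B^n$, we get
\begin{equation*}
E^{n} = \left(\prod_{k=0}^{n-1} M^{n-1-k}\right) E^0 + \sum_{j=1}^{n} \left(\prod_{k=j+1}^{n} M^{n+j-k}\right)\left(I+O(\dt)\right)\ste^{j}.
\end{equation*}
Each product of the $M$'s is a product of $\mD$ perturbed by $O(\dt)$ terms; the standard estimate for products of power-bounded matrices perturbed at size $O(\dt)$ gives $\|\prod M\| \le K e^{K C_1 (t_n - t_j)} \le K e^{K C_1 T} =: \tilde K$ on $[0,T]$. Using $\|E^0\| \le O(\dt^p)$ (the initialization is by a highly accurate method, or at worst we take $E^0$ of size $O(\dt^p)$), and that there are at most $n \le T/\dt$ terms in the sum, each of size $\tilde K \cdot C_2 \dt^{p+1}$, we obtain
\begin{equation*}
\|E^n\| \le \tilde K\, \|E^0\| + n \tilde K C_2 \dt^{p+1} \le \tilde K\, O(\dt^p) + \tilde K C_2 T \dt^{p} = O(\dt^p),
\end{equation*}
which is the claimed bound, and it is $\ll 1$ once $\dt$ is small enough. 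This simultaneously justifies a posteriori the assumption $\|E^n\| \le O(\dt^p) \ll 1$ used in Observations 1 and 2 and in Lemma \ref{Lemma1}.

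The one genuinely delicate point — and the place I would be most careful — is the apparent circularity: Lemma \ref{Lemma1} derives \eqref{error_evolution} under the a priori assumption that $\|E^n\|$ is already small, and now we want to use \eqref{error_evolution} to prove that smallness. The clean way to handle this is a bootstrap/induction on $n$: assume inductively that $\|E^m\| \le C\dt^p$ for all $m \le n$ with a constant $C$ to be fixed, note that on this event the derivations of Observations 1--2 and Lemma \ref{Lemma1} are valid up to step $n$, push the recursion one step to bound $\|E^{n+1}\|$ by the Gronwall estimate above, and check that the resulting constant is still $\le C$ provided $T$ and $C$ are chosen consistently (and $\dt$ small). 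The base case is the initialization bound on $E^0$. So the main obstacle is not really a hard computation but rather setting up this induction carefully so that the constants close and the smoothness-based expansions remain licensed at every step; everything else is the routine power-boundedness-of-$\mD$ plus discrete Gronwall argument.
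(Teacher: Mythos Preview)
The paper does not actually prove this lemma: it simply states ``The proof of this Lemma is given in \cite{EIS2pp}, Lemma~3.'' So there is no in-paper argument to compare against. Your proposal is a correct, self-contained proof along the standard lines one would expect in that reference: invert the left-hand side of \eqref{error_evolution} for small $\dt$, unroll to a discrete Duhamel/Gronwall sum, use power-boundedness of $\mD$ (from the rank-one consistency assumption) together with the $O(\dt)$ perturbation to bound the transition products, and sum $n\le T/\dt$ contributions of size $O(\dt^{p+1})$. The fact that the paper's proof of the main Theorem uses exactly this discrete Duhamel mechanism (citing Lemma~5.1.1 of \cite{gustafsson1995time}) strongly suggests the cited proof follows the same route.

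Your handling of the circularity via an induction/bootstrap on $n$ is the right way to make the argument rigorous, and is worth keeping explicit: the paper's presentation (here and in the Observations preceding Lemma~\ref{Lemma1}) assumes $\|E^n\|\le O(\dt^p)$ in advance and then points to this lemma to justify it, so the inductive closure you describe is precisely what is needed to avoid begging the question.
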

The proof of this Lemma is given in \cite{EIS2pp}, Lemma 3.


 \section{Designing multi-derivative error inhibiting schemes that can be  post-processed to order $p+2$}   
In this section we show how to construct two-derivative GLMs of the form \eqref{2Dmethods} 
that are error inhibiting and so produce a solution of order $p+1$ even though one would
only expect order $p$ from their truncation errors. We further show that under additional conditions we can express the exact form of the error and define an
associated post-processor that allow us to recover order $p+2$ from  a scheme that would otherwise be only $p$th order accurate.

\subsection{Two-derivative error inhibiting schemes that have $p+1$ order} \label{sec:mdEIS}

In this section we consider a  multi-derivative GLM  of the form   \eqref{2Dmethods}
where we assume that $\mD$ is a rank one matrix  that satisfies the consistency condition 
$\mD \ve =\ve$ so that this scheme is zero-stable.
 Furthermore, we assume that the coefficient matrices  $\mD$, $\mA$, $\mR$, $\mhA$, $\mhR$ 
 are such that the  order conditions $\ste_j=0$ are satisfied for all $j \leq p$, so that the method 
 will give us a numerical solution that has error that is guaranteed of order  $p$.
In the following theorem we show that if the truncation error vector $\ste_{p+1}$ 
lives in the null-space of the operator $\mD$ the order of the error can be shown to be of order $p+1$. 
Furthermore, the theorem shows  that under additional conditions 
on the coefficient matrices  $\mD$, $\mA$, $\mR$, $\mhA$, $\mhR$,  
we can determine precisely what the leading term of this error 
will look like and therefore remove it by post-processing.

\begin{theorem}
Given a zero-stable two-derivative general linear method of the form
\begin{eqnarray*} 
V^{n+1} = \mD V^n +  \Delta t \mA F( V^n) +  \Delta t \mR F( V^{n+1})
+  \Delta t^2 \mhA \dF( V^n)  +  \Delta t^2 \mhR \dF( V^{n+1})
\end{eqnarray*}
where $\mD$ is a rank one matrix  that satisfies the consistency condition $\mD \ve =\ve$,
and the  coefficient matrices  $\mD$, $\mA$, $\mR$, $\mhA$, $\mhR$ satisfy the order conditions 
\[ \ste_j  = 0 \; \; \; \; \mbox{for} \; \;  j=1, . . . , p,\] where
\begin{eqnarray*}
 \ste_j &=&  \frac{1}{(j-1)!} \left(\frac{1}{j} \mD (\vc-\ve)^j  +\mA (\vc-\ve)^{j-1}  + (j-1) \mAh (\vc-\ve)^{j-2} \right. \\
&+& \left.  \mR \vc^{j-1} + (j-1) \mRh  \vc^{j-2} -  \frac{1}{j} \vc^j    \right) \; \; \; \; \mbox{for j=1,2, ...} ,
\end{eqnarray*}
if the error inhibiting condition 
\begin{subequations}  \label{EIS2conditions}
\begin{eqnarray}
\mD\ste_{p+1}=0  \label{con1} 
\end{eqnarray}
is satisfied, then the numerical solution produced by this method will have error \[E^n = O(\dt^{p+1}).\]
Furthermore, if the conditions 
\begin{eqnarray}
\mD\ste_{p+2}=0  \label{con2}\\
\mD(\mA+\mR)   \ste_{p+1}=0 \label{con3}
\end{eqnarray}
\end{subequations}
are also satisfied, then error vector will have the more precise form:
 \begin{equation} \label{Error_form}
E^n= \Delta t^{p+1}   \ste^n_{p+1} +   O( \Delta t^{p+2}).
\end{equation}

\end{theorem}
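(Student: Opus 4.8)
## Proof proposal

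The plan is to iterate the error evolution equation from Lemma~\ref{Lemma1} and track which terms survive at each order in $\dt$. Starting from
\[
\left( I - \Delta t \mR  F_y^{n}  +   O(  \Delta t^2 )  \right) E^{n+1} =\left(  \mD +  \Delta t \mA  F_y^n + O(\Delta t^2 )  \right) E^n  +      \ste^{n+1},
\]
I would first invert the operator on the left, writing $\left( I - \Delta t \mR F_y^n + O(\dt^2)\right)^{-1} = I + \Delta t \mR F_y^n + O(\dt^2)$ (a Neumann series, valid since $\mR$ and $F_y^n$ are bounded), to obtain a recursion of the form
\[
E^{n+1} = \left( \mD + \Delta t (\mA + \mR\mD) F_y^n + O(\dt^2) \right) E^n + \ste^{n+1} + \Delta t \mR F_y^n \ste^{n+1} + O(\dt^2)\ste^{n+1}.
\]
Since $\ste^{n+1} = O(\dt^{p+1})$ by the order conditions, the correction terms multiplying $\ste^{n+1}$ are $O(\dt^{p+2})$ and can be absorbed. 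This is exactly the setup used in \cite{EIS2pp}, so the combinatorial bookkeeping is inherited.

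The first claim, $E^n = O(\dt^{p+1})$, follows from condition \eqref{con1}: writing $E^n = \sum_k \dt^k \xi^n_k$ and feeding $\ste^{n+1} = \dt^{p+1}\ste_{p+1}^{n+1} + O(\dt^{p+2})$ into the recursion, the would-be $O(\dt^p)$ component of $E^{n+1}$ is governed by $\mD$ acting on the $O(\dt^p)$ component of $E^n$ together with $\mD \ste_{p+1}$-type source terms; because $\mD$ is rank one with $\mD\ve = \ve$ and $\mD\ste_{p+1} = 0$, the homogeneous part contracts onto the consistency eigenvector while the forcing in that direction vanishes, so no $O(\dt^p)$ term can accumulate. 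Summing the $O(\dt^p)$ contributions over $O(1/\dt)$ steps then gives $O(\dt^{p+1})$ rather than $O(\dt^p)$. I would invoke Lemma~\ref{Lemma3} to justify that the asymptotic expansion of $E^n$ is legitimate throughout $[0,T]$.

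For the refined form \eqref{Error_form}, I would push the expansion one order further. Having established $E^n = \dt^{p+1}\xi^n_{p+1} + O(\dt^{p+2})$, I substitute back into the recursion and collect the $\dt^{p+1}$ terms: $\xi^{n+1}_{p+1} = \mD \xi^n_{p+1} + \ste^{n+1}_{p+1}$. The leading source is $\ste^n_{p+1}$ itself; the homogeneous propagator is $\mD$. Iterating, $\xi^n_{p+1} = \mD^n \xi^0_{p+1} + \sum_{m=1}^{n} \mD^{n-m}\ste^m_{p+1}$, and since $\mD$ is a rank-one projector onto $\mathrm{span}(\ve)$ with $\mD\ste_{p+1} = 0$, every term $\mD^{n-m}\ste^m_{p+1}$ with $n - m \geq 1$ dies, leaving only $\ste^n_{p+1}$ (from $m = n$) plus the decaying initial-data contribution (assumed negligible by the high-accuracy initialization). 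This yields $E^n = \dt^{p+1}\ste^n_{p+1} + O(\dt^{p+2})$ provided the $O(\dt^{p+2})$ remainder does not itself blow up — and that is where \eqref{con2} and \eqref{con3} enter: \eqref{con2} is the analogue of \eqref{con1} one order up, killing secular growth of the $\dt^{p+2}$ component, while \eqref{con3} controls the cross term where the $\dt^1\,(\mA + \mR\mD)F_y^n$ part of the propagator hits the $\dt^{p+1}\ste_{p+1}$ part of $E^n$, ensuring $\mD(\mA+\mR)\ste_{p+1} = 0$ removes the only surviving $O(\dt^{p+2})$ forcing in the $\ve$-direction.

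The main obstacle is the second part: carefully identifying, among all the $O(\dt^{p+2})$ terms generated by the Neumann inversion and the nonlinear remainders, exactly which ones lie in the range of the rank-one projector $\mD$ and hence can accumulate secularly, so as to confirm that \eqref{con2} and \eqref{con3} are precisely the conditions that suppress them. The bookkeeping must also correctly handle that $F_y^{n+1} = F_y^n + O(\dt)$ shifts — replacing $F_y$ at one grid point by another — generate further $O(\dt^{p+2})$ terms whose $\mD$-projections need to be checked. Since the structure is identical to the one-derivative case in \cite{EIS2pp} (the two-derivative terms $\mAh, \mRh$ only modify $\ste_j$ and appear at $O(\dt^2)$ relative to the leading propagator, hence do not alter the $O(\dt)$ and $O(1)$ parts of the recursion that drive the analysis), I expect to be able to cite that paper for the delicate accumulation estimate and focus the new work on verifying that the two-derivative modifications are harmless.
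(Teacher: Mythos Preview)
Your proposal is correct and follows the same route as the paper: Neumann inversion to write $E^{n+1} = Q^n E^n + \dt T_e^n$ with $Q^n = \mD + \dt(\mA+\mR\mD)F_y^n + O(\dt^2)$, then control accumulation using the rank-one projector structure of $\mD$; the paper organizes this explicitly via the discrete Duhamel formula, splitting the resulting sum into four pieces (initial data, last term $\dt T_e^{n-1}$, penultimate term $\dt Q^{n-1}T_e^{n-2}$, and the bulk $\sum_{\nu\le n-3}$ where two successive $Q$'s hit $T_e^\nu$), which is equivalent to your coefficient-by-coefficient iteration. One minor correction to your attribution of \eqref{con3}: the $\mD\mR\ste_{p+1}$ contribution does not come from the propagator cross term (there $\mR\mD\ste_{p+1}$ already vanishes by \eqref{con1}, leaving only $\mD\mA\ste_{p+1}$), but from the Neumann-corrected source $\dt\,\mR F_y^n\,\ste^{n+1}$ you wrote down, whose $\dt^{p+2}F_y\mR\ste_{p+1}$ piece is hit by the leading $\mD$ of the next $Q$ --- combining the two yields exactly $\mD(\mA+\mR)\ste_{p+1}$.
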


\begin{proof}
Using Lemma 1 we  obtain the equation for the evolution of the error
\begin{eqnarray} \label{eq:error}
E^{n+1} & = & \left( I -  \Delta t \mR F_y^n + O(\dt^2) \right)^{-1} \left[ \left( \mD +  \Delta t \mA F_y^n + O(\dt^2)  \right) E^n + \ste^{n+1} \right]
\end{eqnarray}
the fact $F$ is smooth assures us that $ \| \Delta t \mR F_y^n \|  << O(1)$ so that we can expand the first term to obtain
\begin{eqnarray*} 
E^{n+1} & = & \left( I +  \Delta t \mR F_y^n + O(\dt^2) \right)  [ \left( \mD +  \Delta t \mA F_y^n + O(\dt^2)  \right) E^n + \ste^{n+1} ] \\
& = &   \left( \mD +  \Delta t F^n_y ( \mR \mD + \mA )  + O(\dt^2) \right) E^n \\
&& +  \dt \left( \dt^p \ste_{p+1}^{n+1} + \dt^{p+1} ( \ste_{p+2}^{n+1}  + F_y^n \mR \ste_{p+1}^{n+1}) +  O(\dt^2) \right)  \\
 & = & {Q}^n E^n + \dt T_e^n .
\end{eqnarray*}
The discrete Duhamel principle (given as  Lemma 5.1.1 in  \cite{gustafsson1995time})  
states that  given an iterative process of the form
\[ E^{n+1} \,=\, Q^n E^n + \dt T_e^n \] 
where $Q^n$ is a linear operator, we have
\begin{equation} \label{App_A_110}
E^{n} \, = \,   \prod_{\mu=0}^{n-1} Q^{\mu}  E^0  \,+\,  \dt \, \sum_{\nu=0}^{n-1}  
\left( \prod_{\mu=\nu+1}^{n-1} Q^{\mu}  \right)  T_e^{\nu} \;.
\end{equation}

To analyze the error, we separate it  into four parts:
\[ E^{n} \, = \,   \underbrace{\prod_{\mu=0}^{n-1} Q^{\mu}  E^0}_I  \,+
\underbrace{  \dt T_e^{n-1}  }_{II}  + \underbrace{ \dt Q^{n-1} T_e^{n-2} }_{III}  
+  \underbrace{  \dt \, \sum_{\nu=0}^{n-3}  \left( \prod_{\mu=\nu+1}^{n-1} Q^{\mu}  \right)  T_e^{\nu}}_{IV} \]
and discuss each part separately:
\begin{itemize}
\item[I.] The  method is initialized so that the  numerical solution vector $V^0$ is accurate
 enough to  ensure that the initial error $E^0$ is negligible and we can ignore the first term. 
 
\item[II.] The final term in the summation is $\dt T_e^{n-1}$. Recall that 
\[ T_e^{n-1} =  \left( \dt^p \ste_{p+1}^{n} + \dt^{p+1} ( \ste_{p+2}^{n}  + F_y^{n-1} \mR \ste_{p+1}^{n}) +  
O(\dt^{p+2}) \right) \] 
so that   this term contributes to the final time error the term
\[ \dt  T_e^{n-1} = \dt^{p+1}  \ste_{p+1}^{n} +  O(\dt^{p+2}) .\]

\item[III.]  The term  $\dt Q^{n-1} T_e^{n-2}$ is a product of the operator
\[ Q^{n-1} =  \left( \mD +  \Delta t F^{n-1}_y ( \mR \mD + \mA )  + O(\dt^2) \right) \]
and the approximation error 
\[ T_e^{n-2} =\left( \dt^p \ste_{p+1}^{n-1} + \dt^{p+1} ( \ste_{p+2}^{n-1}  + F_y^{n-2} \mR \ste_{p+1}^{n-1}) +  O(\dt^{p+2}) \right) ,\]
so we obtain 
\[ Q^{n-1}  T_e^{n-2}  = \dt^p \mD \ste_{p+1}^{n-1}  + O(\dt^{p+1})  = O(\dt^{p+1}),\]
due to  the condition \eqref{con1} that states that $ \mD \ste_{p+1} = 0$.

Looking more closely at the $O(\dt^{p+1})$ terms that remain in this product  
and applying \eqref{con1} again
\begin{eqnarray*}
& \mD ( \ste_{p+2}^{n-1}  + F_y^{n-2} \mR \ste_{p+1}^{n-1}) 
+   F^{n-1}_y ( \mR \mD + \mA )  \ste_{p+1}^{n-1} + O(\dt)  \\
& =  \mD ( \ste_{p+2}^{n-1}  + F_y^{n-2} \mR \ste_{p+1}^{n-1}) 
+   F^{n-1}_y  \mA   \ste_{p+1}^{n-1} + O(\dt)  \\
& =  \mD ( \ste_{p+2}^{n-1}  + F_y^{n-1} \mR \ste_{p+1}^{n-1}) 
+   F^{n-1}_y  \mA   \ste_{p+1}^{n-1}  + O(\dt)  \\
& =  \mD \ste_{p+2}^{n-1}  + F_y^{n-1} ( \mD  \mR  + \mA ) \ste_{p+1}^{n-1}  + O(\dt)  \\
\end{eqnarray*}
where we used the observation that $F^{n-2}_y   = F^{n-1}_y  + O(\dt)$.
The product is then
\[ Q^{n-1}  T_e^{n-2} = \dt^{p+1} \left[ \mD \ste_{p+2}^{n-1}  + F_y^{n-1} ( \mD  \mR  + \mA ) \ste_{p+1}^{n-1}  
 \right]  + O(\dt^{p+2}) , \]
we now use the two error inhibiting conditions \eqref{con2} and \eqref{con3} to  obtain
\[ Q^{n-1}  T_e^{n-2} =  O(\dt^{p+2}) . \]

\item[IV.] Finally we look at the rest of the sum and use the boundedness of the operator $Q^n$ to observe 
\begin{eqnarray*} \label{App_A_120}
\left\| \dt \, \sum_{\nu=0}^{n-3}  \left( \prod_{\mu=\nu+1}^{n-1} Q^{\mu}  \right)  T_e^{\nu}  \right\| & = &
\left\| \dt \, \sum_{\nu=0}^{n-3}  \left( \prod_{\mu=\nu+3}^{n-1}\hat {Q}^{\mu}  \right)   \left(\hat {Q}^{\nu+2}\hat {Q}^{\nu+1} T_e^{\nu} \right) \right\|  \nonumber \\
& \leq & \dt \, \sum_{\nu=0}^{n-3}  \left\|  \prod_{\mu=\nu+3}^{n-1} Q^{\mu}  \right\|   \left\|  Q^{\nu+2}  Q^{\nu+1} T_e^{\nu}  \right\| \nonumber  \\
& \leq & \dt \, \sum_{\nu=0}^{n-3}\left( 1+ c \,\dt \right)^{n- \nu -3}   \left\|  Q^{\nu+2}  Q^{\nu+1} T_e^{\nu}  \right\| \\
& = & \frac{\exp \left( c t_n\right) -1}{c}  \max_{\nu=0, . . .,  n-3}  \left\|  Q^{\nu+2}  Q^{\nu+1} T_e^{\nu}  \right\|  . 
\end{eqnarray*}
Clearly, the first term here is a constant that depends only on the final time, so it is the product $Q^{\nu+2}  Q^{\nu+1} T_e^{\nu}$ we need to bound.
Using the definition of the operators $Q^\mu$ 
\[  Q^{\nu+2}  Q^{\nu+1} =  \left[  \mD^2  +  \Delta t
\left(  F_y^{\nu+2}   \left( \mR \mD+\mA \right) \mD +F_y^{\nu+1}  \mD \left( \mR \mD +\mA \right)  \right )
+  O(\dt^2) \right] \]
and the approximation error
\[ T_e^{\nu} = \left( \dt^p \ste_{p+1}^{\nu+1} + \dt^{p+1} ( \ste_{p+2}^{\nu+1}  + F_y^{\nu} \mR \ste_{p+1}^{\nu+1}) +  O(\dt^{p+2}) \right) \]
we have
\begin{eqnarray*}
Q^{\nu+2}  Q^{\nu+1}T_e^{\nu}
  & = &   \Delta t^{p}  \left( \mD  +  
 \Delta t  F_y^{\nu+2}  \left(  \mR \mD+\mA \right) 
 + \dt F_y^{\nu+1}  \mD \mR \right)  \mD  \ste^{\nu+1}_{p+1}  \\
  &+ &    \Delta t^{p+1} \left( F_y^{\nu+1} \mD^2  \mR   +  F_y^{\nu+1}  \mD \mA  \right) \ste^{\nu+1}_{p+1} 
  +  \Delta t^{p+1} \mD^2  \ste^{\nu+1}_{p+2} +   O(\dt^{p+2}) . 
\end{eqnarray*}
Using the fact that in our case $\mD^2 = \mD$ and that $F_y^{\nu+2}  = F_y^{\nu+1}  +O(\dt)$ we obtain
\begin{eqnarray*}
 Q^{\nu+2}  Q^{\nu+1}T_e^{\nu} & = & 
 \Delta t^{p}  \left( \mD  +  
 \Delta t  F_y^{\nu+2}  \left(  \mR \mD+\mA \right) 
 + \dt F_y^{\nu+1}  \mD \mR \right)  \mD  \ste^{\nu+1}_{p+1}  \\
  &+ &    \Delta t^{p+1}  F_y^{\nu+1}  \mD  \left(   \mR   +   \mA  \right) \ste^{\nu+1}_{p+1} 
  +  \Delta t^{p+1} \mD  \ste^{\nu+1}_{p+2} +   O(\dt^{p+2}) .
\end{eqnarray*}
The first term and third terms disappear because  of \eqref{con1} and \eqref{con2}
\[\mD \ste_{p+1} = 0 \; \; \; \; \; \mbox{and} \; \; \; \; \; \mD \ste_{p+2} =0. \] 
The second term is eliminated by \eqref{con3} 
\[ \mD  \left(   \mR   +   \mA  \right) \ste_{p+1}  = 0 .\]
So that 
\[  Q^{\nu+2}  Q^{\nu+1}T_e^{\nu}  = O(\dt^{p+2}) .\]

\end{itemize}

All these parts together  give us the result
\begin{eqnarray}
E^{n} & = &     \underbrace{\prod_{\mu=0}^{n-1} Q^{\mu}  E^0}_I  \,+
\underbrace{  \dt T_e^{n-1}  }_{II}  + \underbrace{ \dt Q^{n-1} T_e^{n-2} }_{III}  
+  \underbrace{  \dt \, \sum_{\nu=0}^{n-1}  \left( \prod_{\mu=\nu+1}^{n-3} Q^{\mu}  \right)  T_e^{\nu}}_{IV}  \nonumber \\
& =&  0 + \underbrace{  \dt^{p+1} \ste^n_{p+1}  }_{II}  +  \underbrace{ O(\dt^{p+2} )  }_{III} +  \underbrace{ O(\dt^{p+2} )  }_{IV} \nonumber  \\
& = &   \dt^{p+1} \ste^n_{p+1} + O(\dt^{p+2}).  
\end{eqnarray}

\end{proof}

\begin{rmk}
We note that generalizing this approach from our work in \cite{EIS2pp} to include a second derivative was quite simple.
This is due to the fact that the second derivative appears with a $\dt^{p+2}$ term multiplying it, so that the operator $Q^n$ is in fact exactly 
the same for a two-derivative method of the form \eqref{2Dmethods} as for the methods considered in  \cite{EIS2pp}. For this reason, expanding this approach
to higher derivatives will be simple: the only change would be in the definition of the order conditions \eqref{tau_j}.
\end{rmk}

\subsection{Designing a post-processer to recover $p+2$ order} \label{sec:postproc}

At every time-step $t_n$ the error vector $E^n$ has the particular form \eqref{Error_form}
\[ E^n= \Delta t^{p+1}   \ste^n_{p+1} +   O( \Delta t^{p+2}), \]
so that the  leading order term $\Delta t^{p+1}   \ste^n_{p+1}$ can be removed by post-processing 
 at the end of the computation and a final time solution of order $p+2$ can be obtained, as we showed in \cite{EIS2pp}.
We notice that the form of the error for multi-derivative method  \eqref{2Dmethods} is exactly the same as that of the error of the EIS methods 
presented in \cite{EIS2pp}, the only difference being the definition of the truncation error vector. 
The theoretical discussion of this post-processor is presented in  \cite{EIS2pp} and we refer the reader to the presentation there.
We review one such construction below:
\begin{enumerate}
\item Select the number of computation steps $m$ that will be used, requiring that
\[ m s \geq p+3\]
where $s$ is the number of steps and $p$ the truncation-error order of the time-stepping method.
\item Define the time vector of all the  temporal grid points in the last $m$  computation steps:
\[
\tilde{\vt} = \left (  t_{n-m+1} + c_1 \dt,       ...,   t_{n-m+1} + c_{s} \dt,  ...,
 t_{n} + c_1 \dt,       ...,   t_{n} + c_{s} \dt
 \right )^T \]
\item Correspondingly, concatenate the final $m$ solution vectors 
 \begin{equation*} 
 \tilde{V}^n \,=\, \left ( 
\begin{array}{c}
V^{n-m+1} \\ 
\vdots \\
V^{n} \\
\end{array} 
\right) \; , \; \; \;  \mbox{and}  \; \; \; \;
\tilde{U}^n
\,=\, \left ( 
\begin{array}{c}
U^{n-m+1} \\ 
\vdots \\
U^{n} \\
\end{array} 
\right).
\end{equation*}
\item Stack $m$ copies of the final truncation error vector  $\ste_{p+1}$
\[\tilde{\ste} = \left(\underbrace{\ste_{p+1}^T,...,\ste_{p+1}^T}_{m} \right)^T \]
\item Define the vertically flipped Vandermonde interpolation  matrix 
$\mT$ on the vector $\tilde{\vt} $, and replace the first column 
(the one corresponding to the highest polynomial term) by $\tilde{\ste}$:
 \[ \mT = \left( \tilde{\ste},  \tilde{\vt}^{ms-1},  \tilde{\vt}^{ms-2},
 . . ., \tilde{\vt}^{2}, \tilde{\vt}, \ve \right) \]
where terms of the form $\tilde{\vt}^q$ are understood as component-wise exponentiation.
Note that $\mT$ is a square matrix of dimension $ms \times ms$.
\item Define the post-processing filter
\[ \Phi = T \, \text{diag} \left(0, \underbrace{1,...,1}_{ms -1 },  \right)\,T^{-1}.\]
\item Finally, left-multiply the solution vector $ \tilde{V}^n $ by the post-processing filter $\Phi$  to obtain the post-processed solution
\[ \hat{V}^n = \Phi   \tilde{V}^n ,\] which will be of order $p+2$, as shown in \cite{EIS2pp}.
\end{enumerate}

As in \cite{EIS2pp} we remark that  this process may break down if the matrix $\mT$ is not invertible, and that numerical instabilities may result if 
$ \| \Phi \|$   is large. This should be verified while building these matrices.


\section{New error inhibiting GLM schemes}  \label{newEISMDmethods}
   In this section we present the new general linear methods (GLMs) that satisfy the error inhibiting
   conditions \eqref{con1}--\eqref{con3}. We divide these methods into three types: explicit methods 
   that have good linear stability regions, explicit strong stability preserving methods,
   and implicit methods that are A-stable. The coefficients of all the methods mentioned in this section
   can be downloaded from our GitHub site \cite{EISpp_MD_github}.
   
\subsection{Explicit  EIS methods with favorable linear stability regions}  \label{explicit_methods} 
We used the optimization procedure to produce a number of explicit methods 
with relatively large regions of linear stability.
As is usual in this field, the linear stability region is defined  as the value of the complex number $\lambda$ 
so that the method applied to the problem $y'=\lambda y$ (and so $y''= \lambda y' =  \lambda^2 y$) is stable.

We consider two types of explicit methods in this section, and denote them  eEIS(s,P)$_2$ and eEIS+(s,P)$_2$.  
The eEIS(s,P)$_2$ methods have $s$ stages and  satisfy the order conditions to some order $p$ 
and the  EIS condition \eqref{con1} so that  the overall order is $P=p+1$.
The eEIS+(s,P)$_2$ methods also have $s$ stages and  satisfy the order conditions to some
order $p$ but they satisfy all three   error inhibiting  conditions \eqref{con1} --  \eqref{con3} 
so that  the overall order is $P=p+2$. 
In this section we present and compare the linear stability regions of some of these methods.
Due to space constraints we do not present the coefficients of all the methods. However,
in Appendix \ref{appendix:coef} we present the coefficients of a selection of the methods that
will be used in the numerical tests in Section \ref{sec:test},
and the coefficients of all methods mentioned in this section can be downloaded  
from our GitHub site \cite{EISpp_MD_github}.

 We begin with an explicit EIS GLM that has $s=2$ stages and satisfies the truncation error 
conditions to $p=2$ and the EIS condition \eqref{con1} so that the overall order is $P=3$, 
and refer to this method as eEIS(2,3)$_2$.  In Figure \ref{EISvsKG}
we compare the stability region of this method to that of the five-stage third order Kinmark-Gray method eKG(5,3)  \cite{KinmarkGray}.
The linear stability region of the  Kinmark-Gray method eKG(5,3)  method, which has five function evaluations,
 is shown with the y-axis going from $(-5,5)$, while the eEIS(2,3)$_2$ method, 
 which has four total function evaluations 
(two evaluations of $F$ and two of $\dF$), is shown with the y-axis going from $(-2s,2s)=(-4,4)$. Clearly, 
it seems that the Kinmark-Gray method is more efficient, even considering the extra cost of computing
five function evaluations over four for the eEIS(2,3)$_2$ method. 
However, the argument for using two-derivative methods is 
that frequently the second derivative is often needed for other parts of the simulation (e.g. the spatial derivative), 
and is  computed whether or not it is used in the time-stepping. 
Thus, we can consider the cost of the eEIS(2,3)$_2$ method to be that of two function evaluations rather than four. 
In this case, the eEIS(2,3)$_2$ has a {\em larger} effective linear stability region than the eKG(5,3) method.
The coefficients of the eEIS$_2$(2,3) are given in Appendix \ref{appendix:coef}  while the coefficients of the 
well-known eKG(5,3) method  can be found in \cite{KinmarkGray} or downloaded from our GitHub site \cite{EISpp_MD_github}. 

\begin{figure}
\begin{center}
 \includegraphics[width=.485\textwidth] {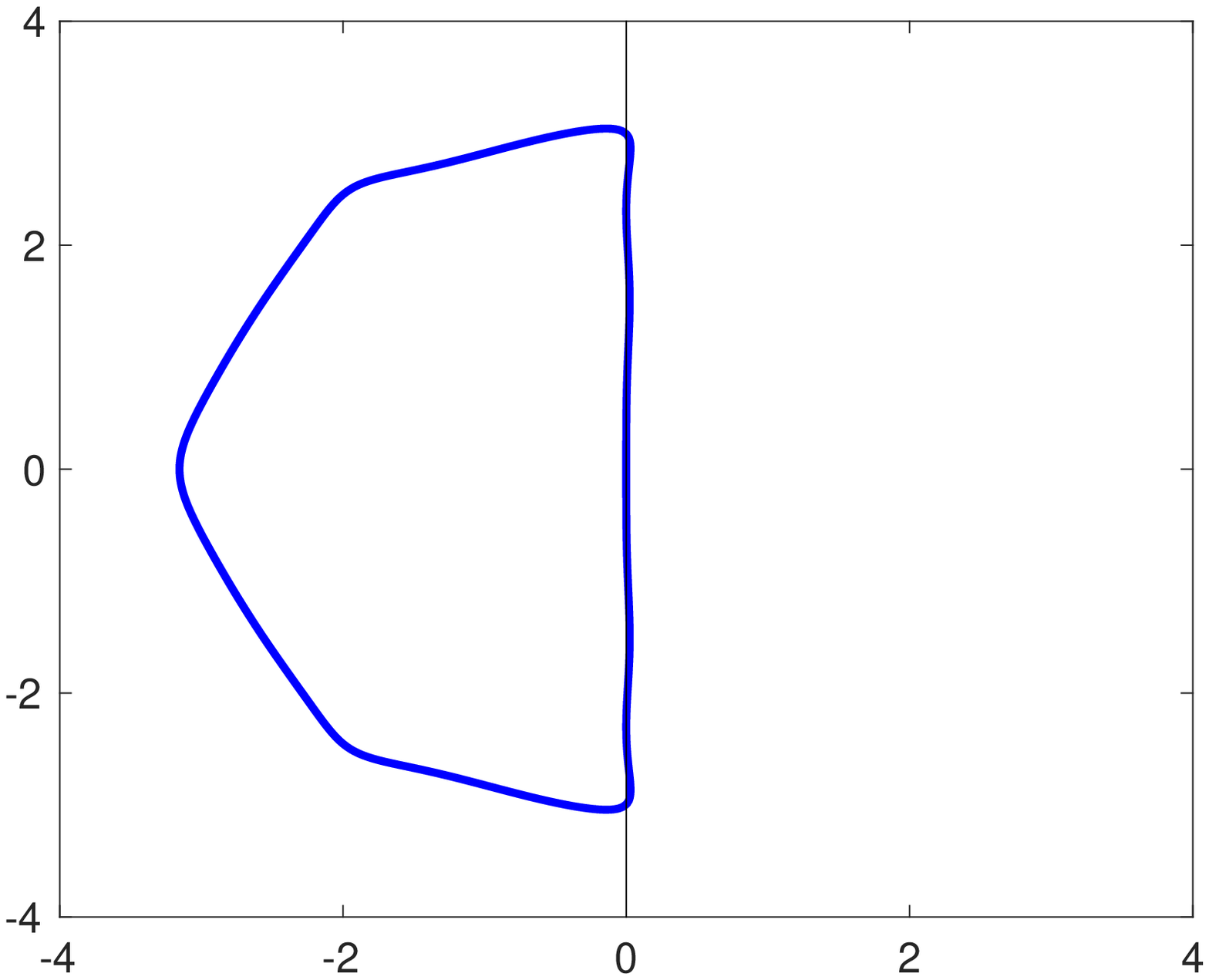} 
   \includegraphics[width=.485\textwidth]{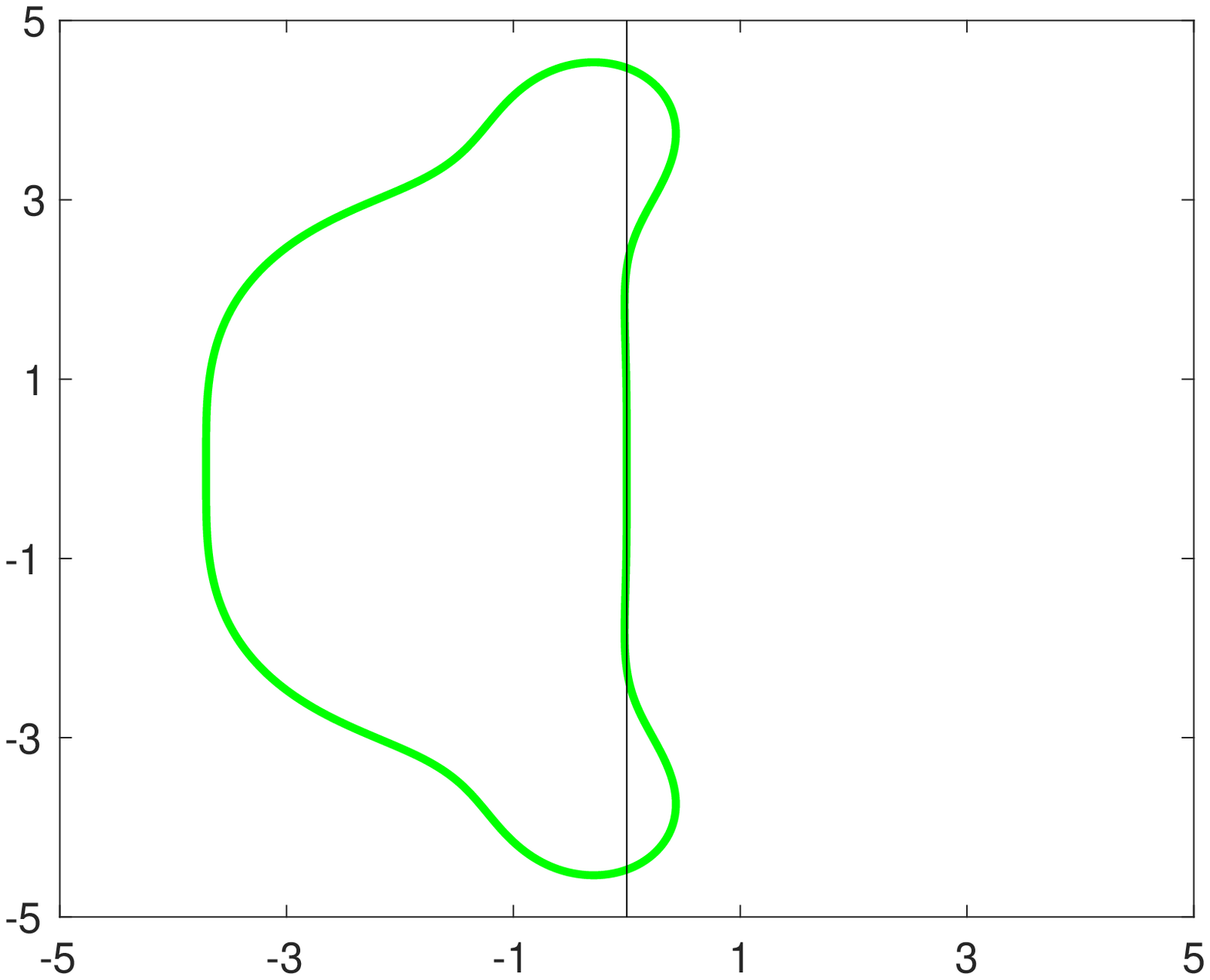} 
 \caption{\label{EISvsKG} The linear stability region of the EIS method eEIS(2,3)$_2$  on left 
 compared to that of the Kinmark-Gray method  eKG(5,3) on the right.
 }
 \end{center}
\end{figure}

Next, we compare the linear stability regions of EIS(s,P)$_2$ and EIS+(s,P)$_2$ methods we found using our optimization method. 
In Figure \ref{EISvsEIS+} on the left we show the linear stability regions of the two-stage fifth order methods
 eEIS(2,5)$_2$ method (in blue) and the eEIS+(2,5)$_2$ method (in red).  
 In Figure \ref{EISvsEIS+} on the right we show the linear stability regions of the three-stage seventh order methods
 eEIS(3,7)$_2$ method (in blue) and the eEIS+(3,7)$_2$ method (in red).
In both cases we can  see that the linear stability region of the EIS+(s,P)$_2$ is larger than that of the corresponding 
EIS(s,P)$_2$ method. This suggests that the additional order condition that the EIS(s,P)$_2$ method must satisfy (the 
EIS(s,P)$_2$ method must satisfy $P-1$ conditions instead of $P-2$ conditions for the EIS+(s,P)$_2$) play more of a role
in  constraining the linear stability region than the additional error inhibiting conditions \eqref{con2} and \eqref{con3}
that the EIS+(s,P)$_2$ conditions must satisfy.
These results show that from a linear stability perspective, it is more efficient to use the 
EIS+ methods than the corresponding EIS methods. This is because the cost of one-time
post-processing is negligible compared to the cost incurred by a restricted time-step which will 
require many more time-steps to reach the same final time.

\begin{figure}[h]
\begin{center}
 \includegraphics[width=.485\textwidth] {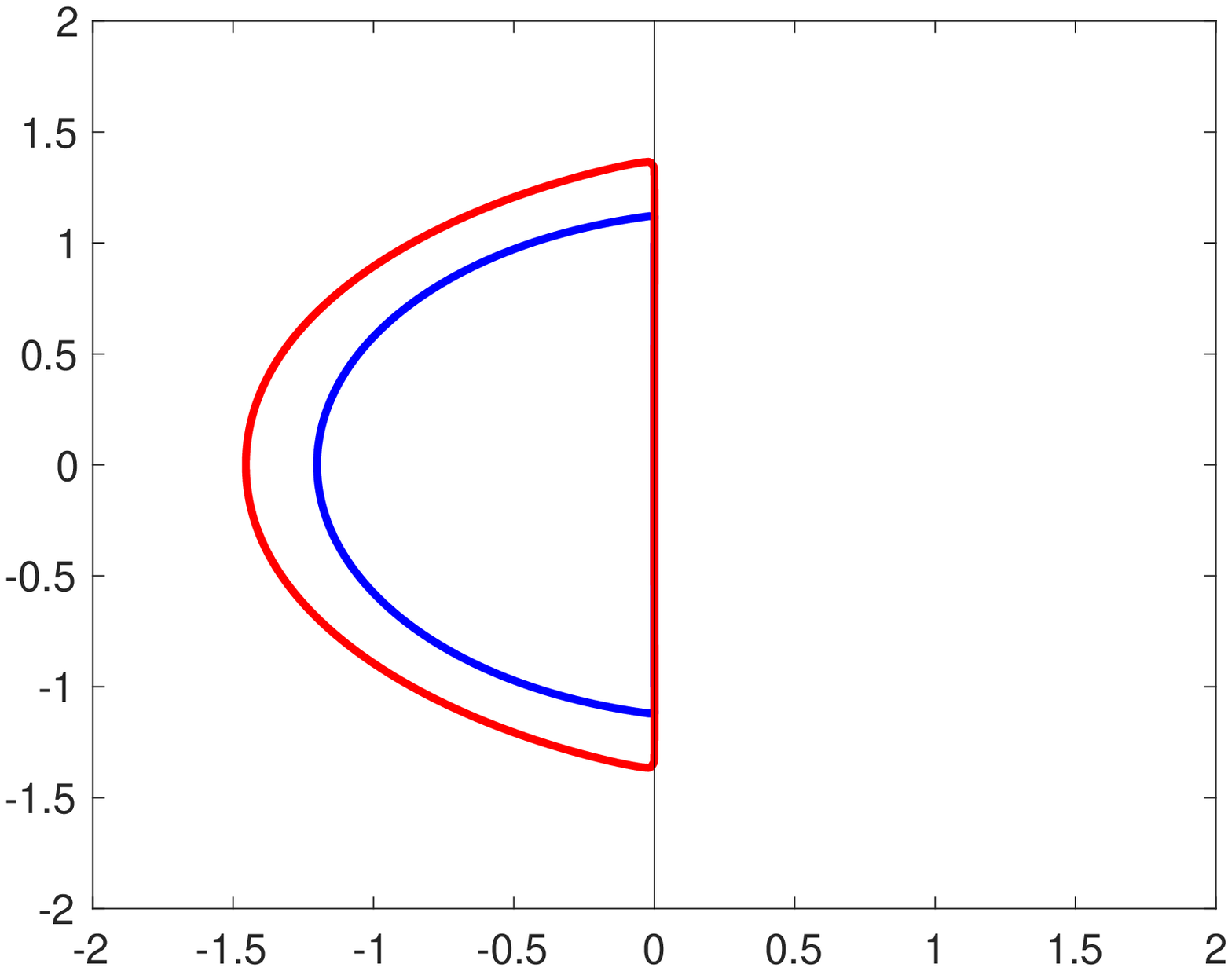} 
   \includegraphics[width=.485\textwidth]{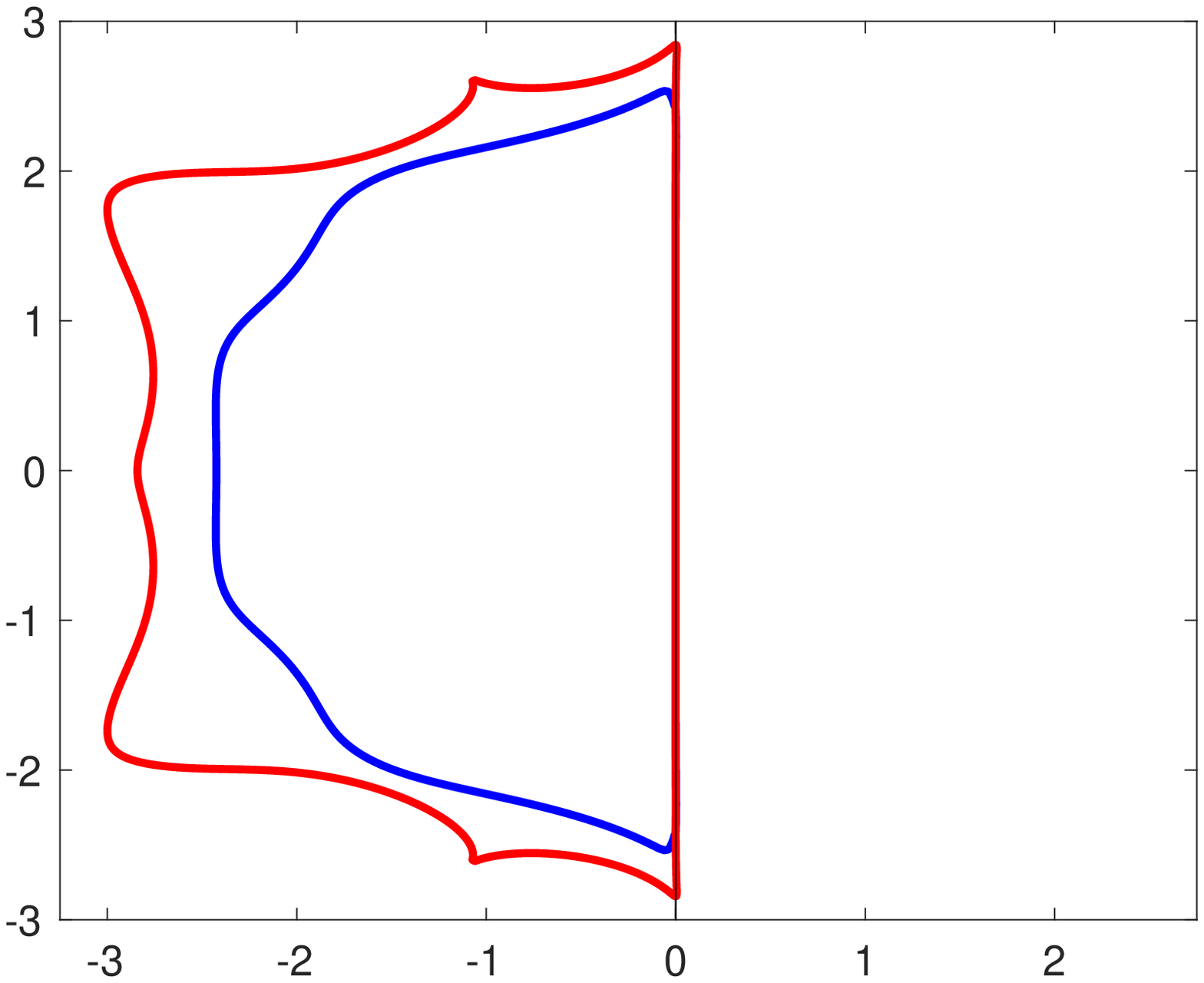} 
 \caption{\label{EISvsEIS+}  The linear stability region of the 
 EIS(s,P)$_2$ compared to the EIS+(s,P)$_2$ methods.
 On the left: the stability region of the eEIS(2,5)$_2$  in blue  and  eEIS+(2,5)$_2$   in red.
  On the right: the stability region of the
 and  eEIS(3,7)$_2$  in blue and  eEIS+(3,7)$_2$ in red.
 }
 \end{center}
\end{figure}

We now focus on the linear stability regions of some very high order EIS+(s,P)$_2$ methods.
These methods are further explored in Section \ref{sec:test} where we test these methods for convergence.
The coefficients of these methods are found in Appendix \ref{appendix:coef}.
Figure   \ref{linstabEIS+} shows the linear stability regions of the eEIS+(2,6)$_2$ (left),
 eEIS+(3,7)$_2$ (middle),  and  eEIS+(4,8)$_2$ (right) methods. 
 The axis limits are in all cases $(-s,s)$, to account for the increased cost 
of function evaluations that is proportional to the number of stages. 
The high order methods  eEIS+(3,7)$_2$,  and  eEIS+(4,8)$_2$ clearly have 
very favorable regions of linear stability.

\begin{figure}[h!]
\begin{center}
 \includegraphics[width=.32\textwidth]{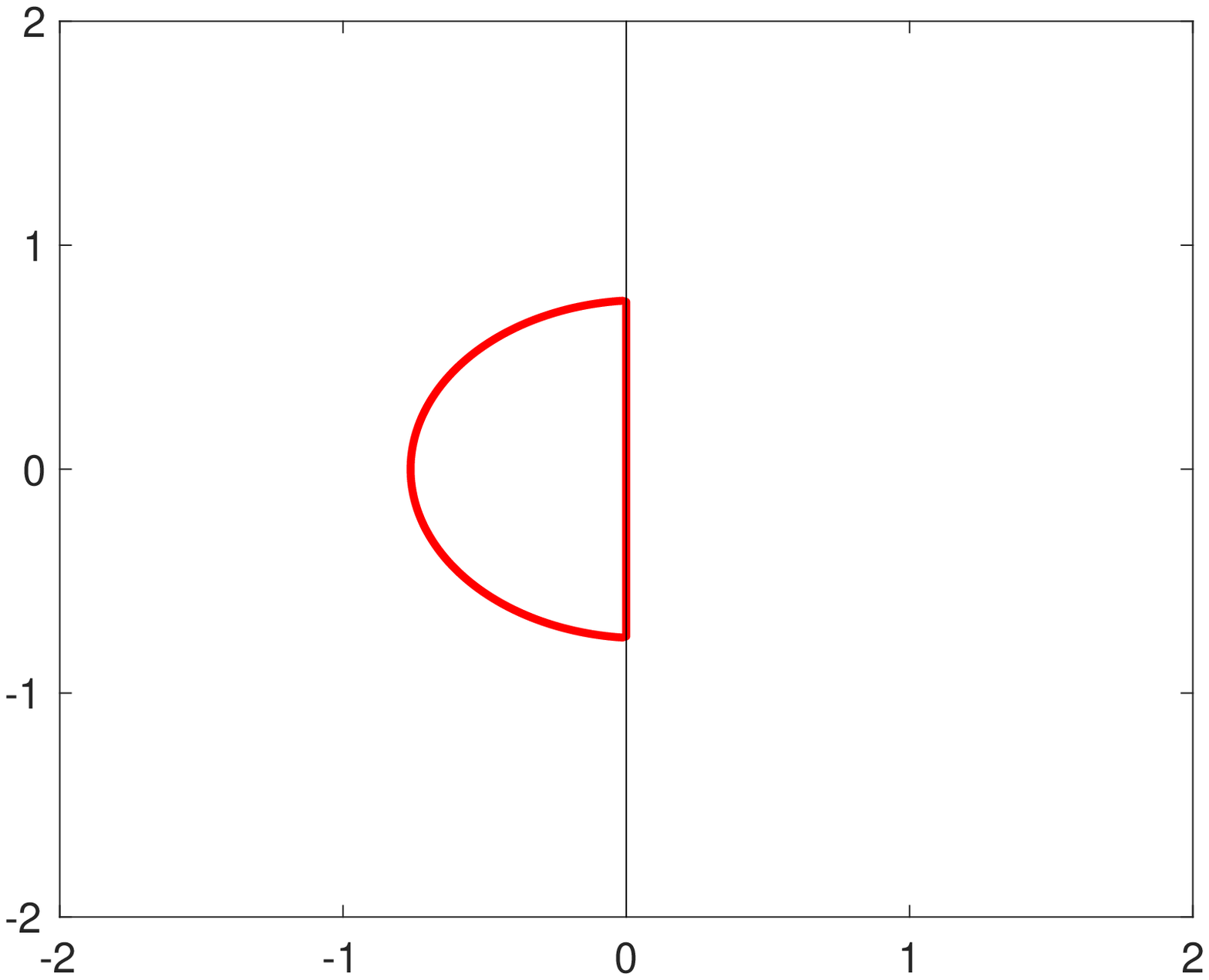} 
   \includegraphics[width=.32\textwidth]{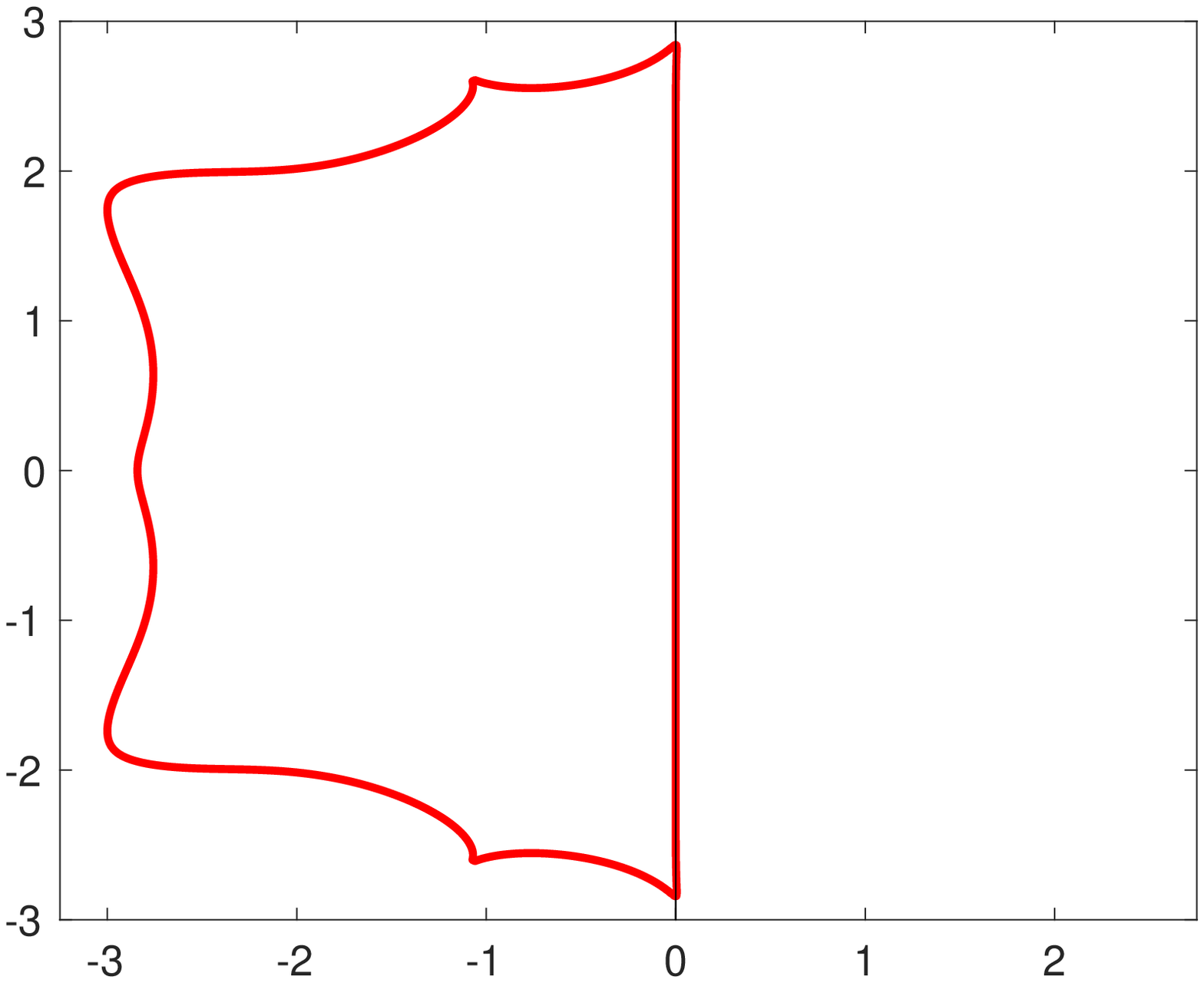} 
      \includegraphics[width=.32\textwidth]{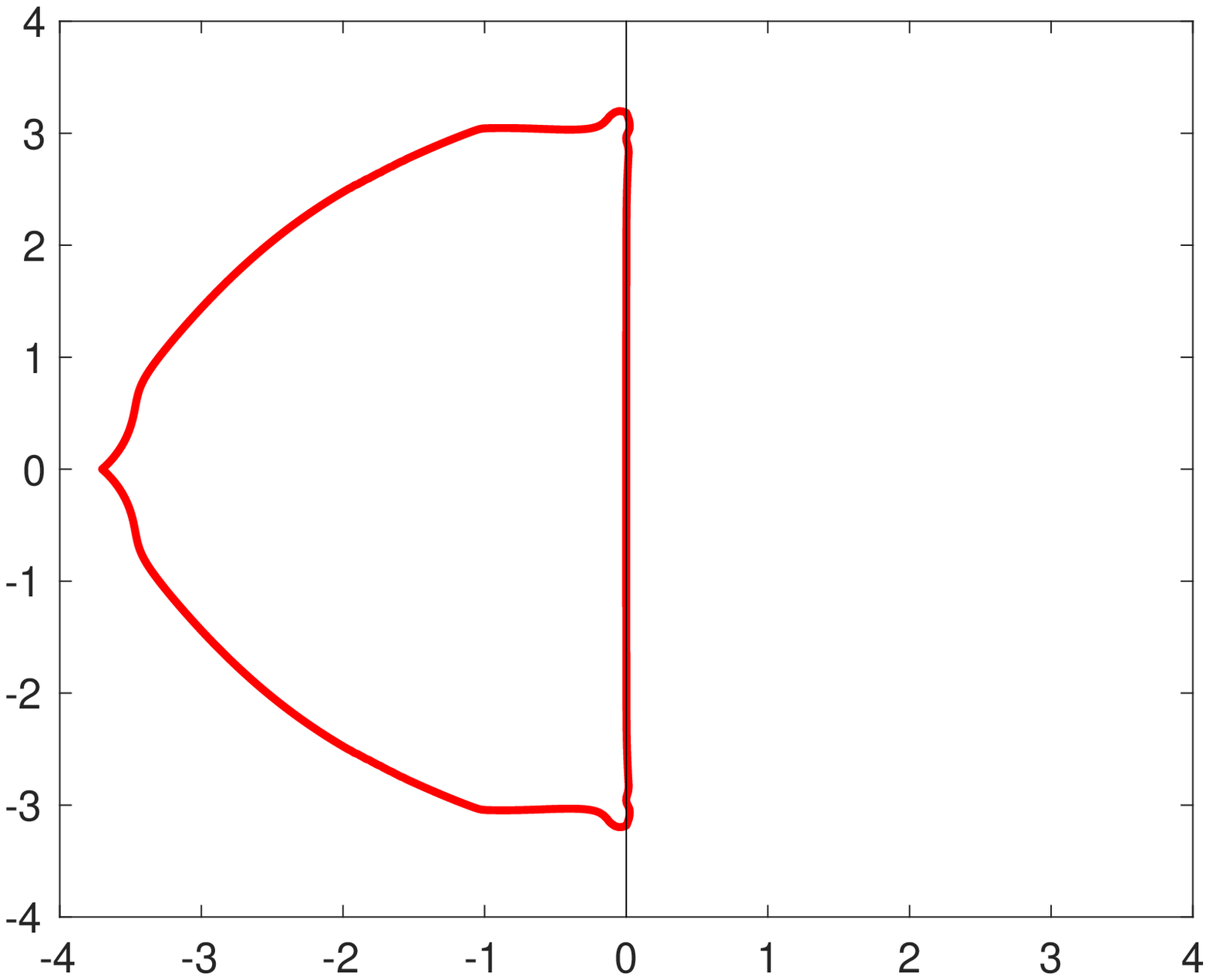} 
 \caption{\label{linstabEIS+}  
 The linear stability regions of the eEIS+(2,6)$_2$ (left),
 eEIS+(3,7)$_2$ (middle),  and  eEIS+(4,8)$_2$ (right) methods. 
 Note that both the x-axis and y-axis limits are proportional to the number of stages,
 so that the graph is on $(-s,s) \times(-s,s) $. This accounts  for the increased cost 
of function evaluations that is proportional to the number of stages. 
 }
 \end{center}
\end{figure}

\subsection{Explicit strong stability preserving EIS methods}  \label{SSP_methods} 
The solution to a hyperbolic conservation law  may develop sharp gradients or discontinuities, 
which requires careful handling to ensure that the numerical solution is stable.
For this purpose, a spatial discretization is carefully designed to satisfy some nonlinear stability 
properties (such as  total variation diminishing, maximum norm preserving,  or positivity preserving properties)
to ensure that it can handle the presence of a discontinuity. However, this spatial discretization method
must be paired with an appropriate time-stepping method to ensure preservation of these
nonlinear non-inner-product stability  properties \cite{SSPbook2011}. 
Such time-stepping methods are known as strong stability preserving  (SSP) time discretizations \cite{SSPbook2011}.

SSP time-stepping methods preserve the nonlinear  non-inner-product stability  properties
of the spatial discretization when coupled with forward Euler, under some time-step restriction
\begin{eqnarray} \label{FE}
 \|u^n +\Delta t F(u^n) \| \leq \| u^n  \| 
	 \;\;   \mbox{for}   \;\;  \Delta t \leq \Delta t_{FE}.
\end{eqnarray}
For two derivative methods, preserving the forward Euler condition is not enough: we need to augment this with a
condition that includes the second derivative. In this work we consider, as we did in \cite{MSMD-TS} the Taylor series condition
\begin{eqnarray}  \label{TS}
 \|u^n +  \Delta t F(u^n) +  \frac{1}{2} \Delta t^2 \dF(u^n) \| \leq \| u^n  \|  \;\;   \mbox{for}  \;\;    \Delta t \leq K \Delta t_{FE},
\end{eqnarray}
where $K$ is some constant.
(This condition is more general than the one we considered in  \cite{MSMD}) 
The explicit SSP two-derivative GLM  methods that we develop in this work can 
be decomposed into a convex combination of \eqref{FE} and \eqref{TS} with $K=1$, and thus
preserve their strong stability properties under the time-step restriction
\[ \dt \leq \sspcoef \Delta t_{FE}.\]

Using the optimization code we found a number of two-derivative error inhibiting 
strong stability preserving (SSP) methods with optimized
SSP coefficient $\sspcoef$. We denote by eSSP-EIS(s,P)$_2$ 
the methods that satisfy the order conditions to order $p=P-1$ and the  error inhibiting
condition \eqref{con1}. The methods  that satisfy the order conditions to order $p=P-2$
and satisfy the error inhibiting conditions \eqref{con1} -- \eqref{con3}  are denoted 
eSSP-EIS(s,P)+$_2$. We present three SSP error inhibiting methods in this section.

\noindent{\bf Explicit strong stability preserving eSSP-EIS(2,3)$_2$ with $\sspcoef =1.5$.} 
This method satisfies the truncation error conditions up to order $p=2$ and the EIS condition \eqref{con1} 
so that we obtain third order convergence ($P=3$). 
The coefficients of this method are rational and given by:
\[ \mD =  \frac{1}{16} \left( \begin{array}{ll}
7 & 9 \\ 
7 & 9 \\ 
\end{array}\right), \; \; 
\mA =  \frac{1}{8} \left( \begin{array}{ll}
2 & 3 \\ 
2 & 3 \\ 
\end{array}\right), \; \; 
\mAh =  \frac{1}{8} \left( \begin{array}{ll}
0 & 1 \\ 
0 & 1 \\ 
\end{array}\right), \; \;  \mR_{2,1}  = \frac{2}{3}, \; \;  \mRh_{2,1} = \frac{2}{9}.  \]
The remaining coefficients of $\mR$ and $\mRh$ are zero,  as this method is explicit.
It is interesting to note that the optimization procedure converged to this EIS method even when we requested
a method that satisfies the truncation error condition only up to order $p=1$ and  all 
the EIS+ conditions \eqref{con1} --  \eqref{con3}.

The SSP coefficient of this method is $\sspcoef=1.5$. We compare this to the SSP coefficient of the  standard 
third order GLM method with two steps and two stages given in \cite{msrk}, 
we denote this one-derivative method by eSSP-MSRK(2,2,3), indicating that it has two steps, two stages, and is order 3.
The SSP coefficient of the eSSP-MSRK(2,2,3) method is $\sspcoef \approx 0.73$.
Our two derivative error inhibiting method eSSP-EIS(2,3)$_2$ (with $\sspcoef=1.5$) 
has 4 function evaluations and the eSSP-MSRK(2,2,3) method 
has two function evaluations, so the  two method would seem to be equally efficient; 
however, if we consider applications in which the second derivative is available with no
additional cost, then the two-step method eSSP-EIS(2,3)$_2$ 
has an {\em effective SSP timestep} that is   twice as large.
 
\begin{table} 
\caption{A table of SSP coefficients of the error inhibiting methods found and the comparable methods in \cite{msrk}. 
 A dash denotes that no such method was obtained.
The asterisk indicates that the code searching for a method of the type eSSP-EIS+(s,P)$_2$
actually converged to a method of the type eSSP-EIS(s,P)$_2$.
\label{tab:ssp}}
\begin{center}
\begin{tabular}{|llccc|} \hline
s & P & eSSP-EIS(s,P)$_2$ & eSSP-EIS+(s,P)$_2$& eSSP-MSRK(s,s,P) \\
2 & 3 & 1.500 & * & 0.732 \\
2 & 4 & 0.990 & 1.000 &  --  \\
2 & 5 & -- & 0.555 & -- \\ \hline
3 & 4& 1.811 & 1.896  & 1.163\\
3& 5 & 1.369 &  1.548 & 0.638 \\
3 & 6 & 0.546  & 1.078 & 0.029 \\
3& 7 & -- & 0.129 & -- \\ \hline
\end{tabular}
\end{center}
\end{table}

We found a number of other SSP methods, and present their SSP coefficients in Table \ref{tab:ssp}. 
The first column gives the number of stages $s$, the second column gives the overall order $P$ that 
the solutions can be expected to be: for EIS methods this is $P=p+1$ and for EIS+ methods
this is $P=p+2$, which is the order after post-processing. 
We note that we did not find an  eSSP-EIS+(2,3)$_2$ method because the optimization scheme converged 
to the  eSSP-EIS(2,3)$_2$ method instead, and that we could not obtain an  eSSP-EIS(2,5)$_2$ 
or an eSSP-EIS(3,7)$_2$ method and believe they do not exist.

In Table  \ref{tab:ssp} we compare the SSP coefficients of the eSSP-EIS(s,P)$_2$ and eSSP-EIS+(s,P)$_2$
methods  to that of the comparable SSP GLM methods of \cite{msrk}. These methods are 
denoted eSSP-MSRK(s,s,P)  to indicate that they have $s$-steps and $s$-stages and are of order $P$.
This is a fair comparison in such cases where the derivative evaluation does not incur any additional cost.
This table shows that for a given $s$ and $P$, the  EIS+ methods have larger allowable SSP coefficients
and so are more efficient, assuming that the second derivative is available for no additional cost and  
that the cost of post-processing is negligible.   The coefficients of all the EIS and EIS+ methods in 
Table \ref{tab:ssp} can be downloaded from our   GitHub directory \cite{EISpp_MD_github}. 
 Below we list the coefficients of two featured methods.
 
 \smallskip

\noindent{\bf Explicit strong stability preserving eSSP-EIS+(2,4) with $\sspcoef =1.0$:} This method has coefficients
\[ \mD =   \left( \begin{array}{ll}
0.435605756635718  & 0.564394243364282 \\
   0.435605756635718 &  0.564394243364282 \\
\end{array}\right), \]
\[ \mA =   \left( \begin{array}{ll}
0.232303428413552  &  0.564394243364282 \\
0.216263460427852  &  0.564394243364282 \\
\end{array}\right), \]
\[ \mAh =  \left( \begin{array}{ll}
0.000000005124887  &  0.260081562620613 \\
0.000000001928255  &  0.146835746492061 \\
\end{array}\right), \]
where the non-zero elements of $\mR$ and $\mRh$ are 
\[ \mR_{2,1}  =  0.376253295127924, \; \; \mbox{and} \; \; \; 
 \mRh_{2,1} =  0.162082671864920  . \]
 The truncation error vector (needed for postprocessing) is 
 \[ \tau_3=  \left( \begin{array}{r}
 -0.063938362828511 \\
 0.049348339827035 \\
  \end{array}\right).
  \]

\smallskip

\noindent{\bf Explicit strong stability preserving eSSP-EIS+(3,6) with $\sspcoef =1.0782 $:} 
This method has coefficients
\[ \mD_{1,i}  =0.235787420033905, \; \;  \mD_{2,i}  = 0.332249926343388,\; \;  \mD_{3,i}  =0.431962653622707,\]

 \[\mA = \left( \begin{array}{ccc}
0.179040619183497  & 0  &  0.400647796399945 \\ 
   0.147616987633695 &   0.118289307755180   & 0.400647796399945 \\
   0.194101834261448  &  0.212027154638658   & 0.400647796399945 \\
\end{array}\right), \] 
\[  \mAh = \left( \begin{array}{ccc}
  0.032860477842919   &  0  &  0.068024553668439 \\
   0.024965463148830  &  0.034155124171981  &  0.021087452933654 \\
   0.011487692416560   & 0.092903917927740  &  0.124915188800131 \\
   \end{array}\right),
  \]
   \[\mR = \left( \begin{array}{ccc}
                 0           &        0       &            0 \\
   0.287524583705647           &        0         &          0 \\
   0.214948333287866  &  0.243023557774243         &           0 \\
\end{array}\right), \]
\[  \mRh = \left( \begin{array}{ccc}
                   0          &         0      &              0 \\
   0.133340336145235         &           0          &          0 \\
   0.050250968106130   & 0.112702859933545         &           0 .\\
   \end{array}\right),
  \]
   The truncation error vector (needed for postprocessing) is 
 \[ \tau_5= \left( \begin{array}{r}
 -0.010752778908703 \\
 -0.021534888908005 \\
 0.022433270953649 \\
 \end{array}\right).
 \]

We note that only the method itself is guaranteed to be SSP, we do not expect the  post-processed solution  to be SSP as well. 
The  post-processor is only designed to extract a higher order solution but not to preserve the strong stability properties. 
This does not typically pose a problem concern because preserving the nonlinear stability properties is generally only
important for the stability of the time evolution: once we reach the final time solution these properties
are no longer needed. If we still desire some properties that are destroyed by the post-processor we can opt to
use the non-post-processed solution instead.

\subsection{Implicit  A-stable  EIS methods} \label{implicit_methods} 
The implicit A-stable methods that we found treat both $F$ and $\dF$ implicitly. 
We tried to find methods that treat $F$ implicitly and $\dF$ explicitly, 
but were not able to find A-stable methods of this type. In this section we present two A-stable implicit methods 
iEIS+(s,P)$_2$ methods. Both $\mR$ and $\mRh$ are  diagonal matrices and so the method
can be implemented efficiently in parallel,  as each stage is computed independently. 
The coefficients of the methods as well as the $\ste{P-1}$  vector needed for  post-processing
are given below.

\smallskip

\noindent{\bf A-stable parallel-efficient iEIS+(2,4)$_2$:}

\[ \mD_{1,i} =0.594710614896760 , \; \; \;   \mD_{2,i} = 0.405289385103240,\]
\[ \mA = \left( \begin{array}{cc}
  -2.187376304427630  & -0.964459220078949 \\
  -1.117865907067007   & 2.067845436796621 \\
 \end{array}\right), \]
\[\mAh = \left( \begin{array}{cc}
   0.778080609332642  & -1.088765766927099 \\
  -2.898999040140121  &  1.440243113199464 \\
 \end{array}\right), \]
\[ \mR =\left( \begin{array}{cc}
   3.949190831954959     &              0 \\
                   0   & 0.347375777718766 \\
 \end{array}\right), \]
\[\mRh = \left( \begin{array}{cc}
 -2.706937237458932          &         0 \\
                   0  &  0.978108368826293 \\
 \end{array}\right). \]
    The truncation error vector (needed for postprocessing) is 
\[\tau_3 =  \left( \begin{array}{r}
-3.111010490530440  \\
4.565012136457357  \\
 \end{array}\right).\]

\noindent{\bf A-stable parallel-efficient iEIS+(3,5)$_2$:}

\[ \mD =\left( \begin{array}{ccc}
   0.439087264857344  & 0.700945256500558  & -0.140032521357901 \\
   0.439087264857344  & 0.700945256500558  & -0.140032521357901 \\
   0.439087264857344  & 0.700945256500558  & -0.140032521357901 \\
 \end{array}\right), \]

\[ \mA = \left( \begin{array}{ccc}
   2.507826539020301 &   3.279683213077780  & -1.170881137598611 \\
  -0.334032190141782  & -4.031402321497854  & 0.685583668720811 \\
  -1.750770284075905  & -4.999999998880823  & 3.295317723260540 \\
 \end{array}\right), \]
\[ \mAh = \left( \begin{array}{ccc}
   2.333968082671988   & 0.419378200972933  & -2.408406401605122 \\ 
  -2.145600247202041   & 0.897829295036851  & -0.721006948644857 \\
  -4.988816152192916   & 3.020756581381562  & -1.533772624102988 \\
 \end{array}\right), \]
\[ \mR= \left( \begin{array}{ccc}
 -3.756922019094389        &           0        &           0 \\
                   0   & 4.872890771657239         &          0 \\
                   0           &        0   &  4.981825821767937 \\
 \end{array}\right), \]
\[ \mRh =\left( \begin{array}{ccc}
   3.591518759368352           &        0         &          0 \\
                   0  & -2.760598976218027      &             0 \\
                   0         &          0    & -3.950356833416136 \\
 \end{array}\right). \]
   The truncation error vector (needed for postprocessing) is 
\[ \tau_4 =  \left( \begin{array}{r}
3.466008686399261 \\
-4.575755330149971 \\
-12.036302018622621 \\
 \end{array}\right). \]

\section{Numerical Results\label{sec:test}}
In this section we focus on testing the numerical methods found and presented in Section \ref{newEISMDmethods}.
We first verify the convergence properties of the methods presented in Sections \ref{explicit_methods} and \ref{implicit_methods},
on a system on nonlinear ODEs. We find that these methods achieve the desired convergence rates.
Next, we explore the behavior of the SSP methods presented in Section \ref{SSP_methods}
in terms of preserving the total variation diminishing  properties of spatial discretizations, 
and show that these methods behave as expected on a standard test-case (see \cite{MSMD-TS}).

\subsection{Convergence of high order explicit and implicit schemes}
We focus on verifying the convergence of the explicit high order methods eEIS+(2,6)$_2$,
eEIS+(3,7)$_2$),  and eEIS+(4,8)$_2$ presented in Section \ref{explicit_methods} and the implicit high order methods
iEIS+(2,4)$_2$ and iEIS+(3,5)$_2$ presented in Section \ref{implicit_methods} .
We show that these methods show the predicted order before and after post-processing on
a nonlinear system of ordinary differential equations.

\noindent{\bf Non-stiff Van der Pol oscillator:}
The nonlinear system of ODEs is given by
\[ \left( \begin{array}{l} y_1 \\ y_2 \end{array} \right)'
= \left( \begin{array}{c} y_2 \\ a(1-y_1^2) y_2 - y_1 \end{array} \right)
\]
with $a=2$ and  initial condition $\vy(0) = (2,0)^T$. We use the explicit methods eEIS+(2,6)$_2$,
eEIS+(3,7)$_2$, eEIS+(4,8)$_2$ to evolve this problem to the final time $T_f=3.0$ and postprocess the
solution at the final time as described in Section \ref{sec:postproc}.
We use three repeats of the truncation error 
vector ($m=3$) for  all methods except  the  eEIS$_2$+(2,6) method for which we used  $m=4$. (We note that
the  iEIS+(3,5)$_2$ method only requires $m=2$ but $m=3$ works well so we use it for consistency with the other
methods).

 In Figure \ref{VDPproblem} we show the square-root of the sum of squares of the errors  in  each component
  for different values of $\dt$. The slopes of these lines are computed using MATLAB's {\tt polyfit} function 
and are as follows:\\
\begin{center}
\begin{tabular}{|l|ccc|cc|}\hline
  & \multicolumn{3}{c|}{eEIS$_2$+(s,P) =}   &  \multicolumn{2}{c|}{iEIS$_2$+(s,P) =} \\
                           & (2,6)  & (3,7) & (4,8)  &  (2,4)  &  (3,5)  \\ \hline
without   post processing &4.7  & 5.8 & 7.0  &   3.0 &   3.9 \\
after  post processing  & 5.8 &   6.6 &7.7 & 4.0 & 5.0 \\ \hline
\end{tabular}
\end{center} 
\smallskip
\noindent Clearly, the orders without postprocessing are of order $P-1$ and after post-processing are of order $P$
for the eEIS+(s,P)$_2$  and iEIS+(s,P)$_2$ methods found in Section \ref{explicit_methods} and \ref{implicit_methods}, respectively .
This example verifies that  numerical solutions from the explicit and implicit error inhibiting  methods 
attain the expected orders of convergence with and without post-processing.
These examples confirm that the mathematical conditions and the methods we found work as
 expected in practice. 
 
\begin{figure}
    \centering
          \includegraphics[width=.475\textwidth]{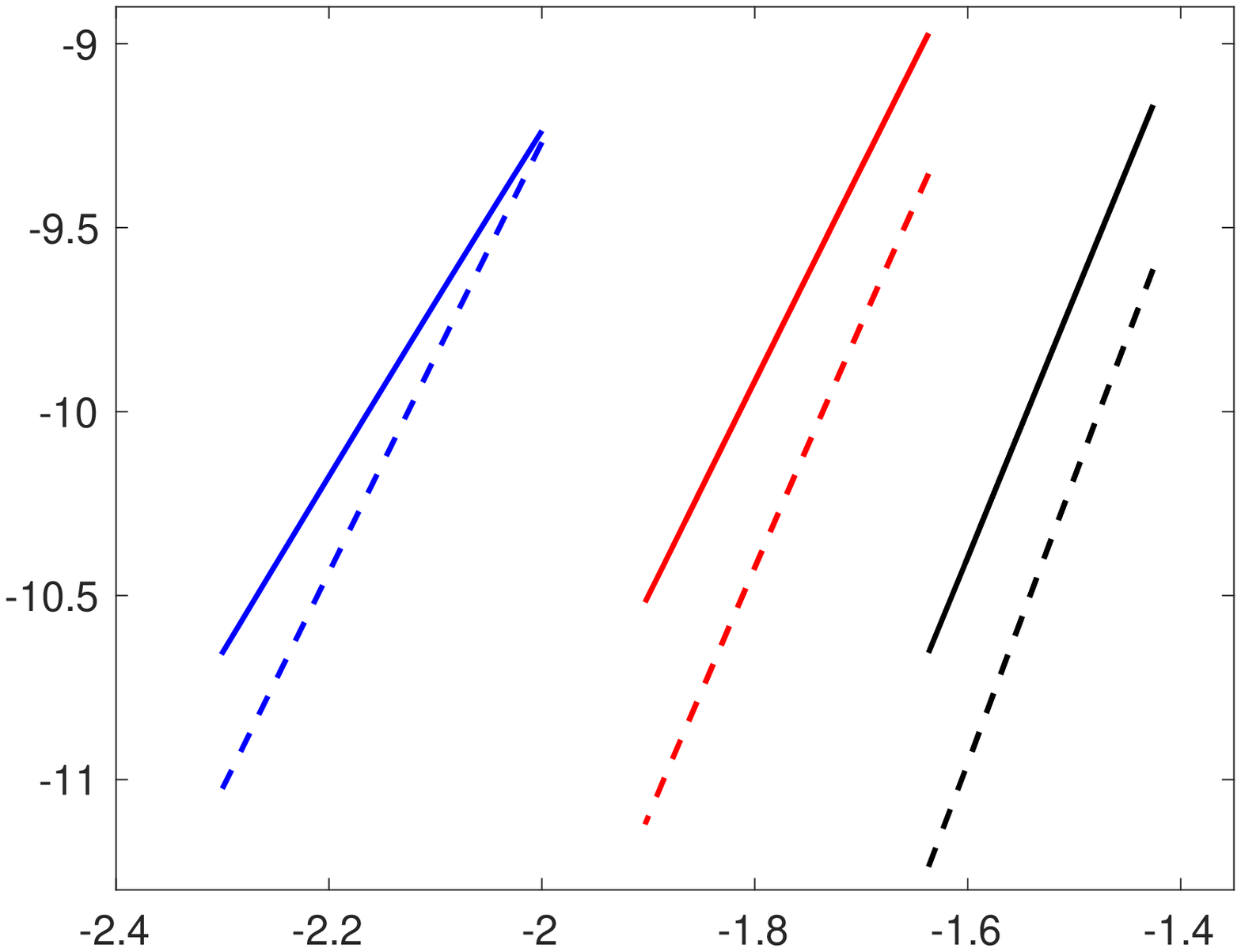} 
            \includegraphics[width=.475\textwidth]{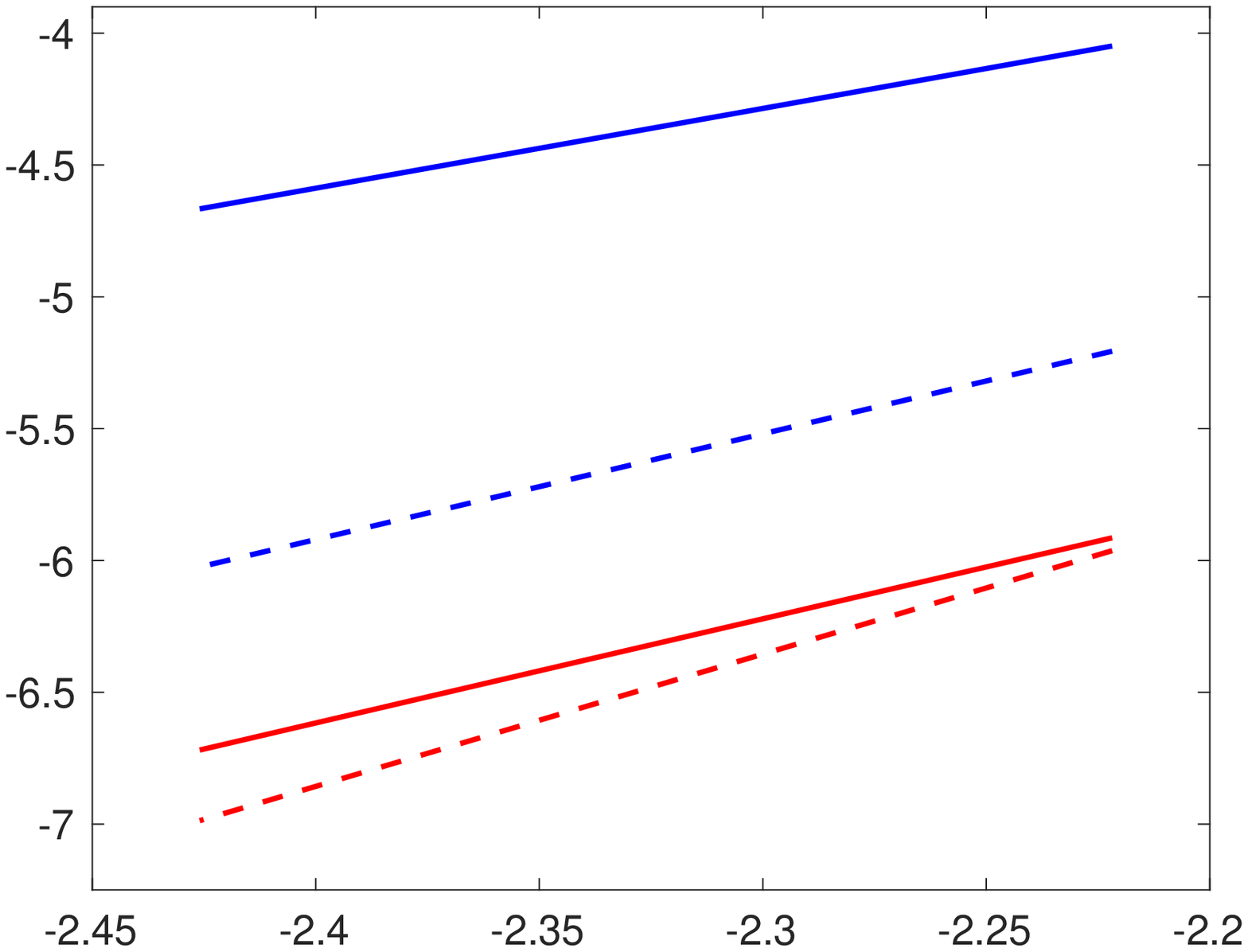} 
                     \caption{Convergence of the solution (in solid line)  and post-processed solution (dashed line)
                      from evolving the Van der Pol     system to time $T_f=3.0$  for the non-stiff Van der Pol system. 
                      On the y-axis is  the $log_{10}$ of the square-root of the  sum of squares  of the errors; 
                      on the x-axis is $log_{10}(\dt)$.
           Left: The explicit methods  eEIS$_2$+(2,6) (in blue), eEIS$_2$+(3,7) (in red), and eEIS$_2$+(4,8) 
           (in black). Right: The implicit methods  eEIS$_2$+(2,4) (in blue), eEIS$_2$+(3,5) (in red).
\label{VDPproblem} }
\end{figure}

\subsection{Comparison of explicit strong stability preserving schemes}
SSP time-stepping methods are useful for evolving in time hyperbolic PDEs with discontinuities.
We focused on finding SSP methods of the form \eqref{2Dmethods} that 
preserve the nonlinear non-inner-product stability properties of the spatial discretization methods 
when coupled with forward Euler and the Taylor series methods, as in \cite{MSMD-TS}.
The eSSP-EIS(s,P)$_2$ and eSSP-EIS+(s,P)$_2$ methods we presented in Section \ref{SSP_methods}
are tested here on a problem where the spatial discretization is total variation diminishing
 when coupled with forward Euler time-stepping and when coupled with a Taylor series method.
 To verify that the methods satisfy the SSP property we  examine the maximal rise in total variation
 when this problem is evolved forward with the  eSSP-EIS+(s,p)$_2$ scheme. We further 
 investigate the effect of post-processing on the total variation of this problem.

We consider the linear advection equation with step function initial conditions:
  \begin{align}
u_t + u_x & = 0 \hspace{.75in}
    u(0,x)  =
\begin{cases}
0, & \text{if } 0 \leq x \leq 1/2 \\
1, & \text{if } x>1/2 \nonumber
\end{cases}
\end{align}
on the domain $[-1,1]$ with periodic boundary conditions.
We use the fact that $u_{tt}= -u_{xt} = -u_{tx} = u_xx$ to approximate $\dF$.
For the spatial discretization we use a first order forward difference for the first and second derivatives with $200$ spatial points:
\[F(u^n)_j := - \frac{u^n_{j}-u^n_{j-1}}{\Delta x} \approx - U_x( x_j ), \]
\[ \dF (u^n)_j :=  \frac{u^n_{j}- 2 u^n_{j-1} + u^n_{j-2}}{\Delta x^2} \approx   U_{xx}( x_j ).
\]
These spatial discretizations satisfy:
\begin{description}
\item{\bf Forward Euler condition} \\
\[ u^{n+1}_j = u^n_j + \frac{\Delta t}{\Delta x} \left( u^n_{j-1} - u^n_j \right)\]
which  is  TVD for $ \Delta t \leq  \Delta x $, \\
\item{\bf Taylor series  condition}\\
\[ u^{n+1}_j = u^n_j + \frac{\Delta t}{\Delta x} \left( u^n_{j-1} - u^n_j \right) 
+ \frac{1}{2} \left(  \frac{\dt}{\dx} \right)^2 \left( u^n_{j-2} - 2 u^n_{j-1} + u^n_{j} \right) \]
 which is TVD for $\Delta t \leq  \Delta x $.
\end{description} 
So that $\DtFE= \Delta x $ and in this case we have $K=1$ in \eqref{TS}.

We evolve the solution 10 time-steps using the SSP methods eSSP-EIS(2,3)$_2$,
eSSP-EIS+(2,4)$_2$  and eSSP-EIS$+(3,6)_2$ for different values of $\dt$. 
At each time-level $y^n$ we compute the total variation of the solution at time $u^n$,  
\[ \| u^n \|_{TV} = \sum_{j} \left| u^n_{j+1} - u^n_j \right| . \]
The maximal rise in total variation (solid line) 
is graphed against $\lambda = \frac{\dt}{\dx}$ in Figure \ref{SSPtest}.
As we can see, the value of $\lambda$ for which the total variation begins to rise
is always greater that the guaranteed SSP value. 

\begin{figure}[t]
    \centering
          \includegraphics[width=.51\textwidth]{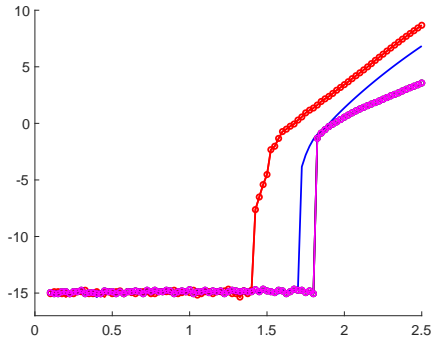} 
           \caption{SSP study: the maximal rise in total variation of the explicit SSP methods 
            SSP-EIS+(2,4)$_2$ (red line), eSSP-EIS(2,3)$_2$ (blue line), and
    eSSP-EIS+(3,6)$_2$ (magenta line) when used to evolve forward the solution in time
           of the linear advection equation with a step function initial conditions.
           On the y-axis is $log_{10}$ of the maximal rise in total variation, and 
           on the x-axis   the CFL number $\lambda$. 
           The circles mark the difference between the total variation of the un-processed solution 
           and that of the post-processed solution.             \label{SSPtest}  }
\end{figure}

We then post-process the solution from the 
eSSP-EIS+(2,4)$_2$  and eSSP-EIS+(3,6)$_2$ methods 
at the final time for all values of $\dt$,
and compute the difference between the total variation of the solution at the 
final time and the postprocessed solution
\[  \| u^n \|_{TV}  -  \| \widehat{u}^n \|_{TV}  \]
We see in  Figure \ref{SSPtest}  that the difference between the total variation of the solution at the  final time and the 
post-processed solution (circle markers) is minimal for  all the values of $\dt$ for which the maximal rise 
in total variation remains bounded.
In particular, for values of $\lambda$ for which the total variation has not started to rise, we
 observe that the maximal difference between the total variation of the
solution and the post-processed solution   remains very small $(\approx 10^{-15})$.
Although the post-processor is only designed to
extract a higher order solution but not to preserve the strong stability properties, it 
is reassuring that the total variation is not adversely impacted by the post-processor in this case.

\section{Conclusions}
  In this paper we extended the theory developed in \cite{EIS2pp} to the case of two-derivative GLMs.
  In \cite{EIS2pp} we showed, for a one-derivative GLM, the exact form of the error in the case that three error inhibiting conditions hold,
  and that this error is one order higher than expected by the truncation error analysis and the next error term is 
  multiplied by a truncation error vector with a known form. Furthermore, we designed a post-processor that
  can eliminate the next term in the error, thus extracting an addition order of accuracy.
  In this work we extend this theory to two-derivative GLMs of the form \eqref{2Dmethods} and prove that under the sufficient conditions
  \eqref{con1}--\eqref{con3} we have an error of the form  \eqref{Error_form}. This form is very exactly the same as the form in 
  \cite{EIS2pp}, except that the value of $\ste_{p+1}$ is different for the two-derivative GLMs than for the one-derivative GLMs.
  For this reason, the construction of a post-processor (in Section \ref{sec:postproc}) follows immediately from the 
  theory in  \cite{EIS2pp}. 
  
  Using this theory, we programmed an optimization code in {\sc Matlab} that uses the order conditions and error inhibiting conditions
  as constraints and aims to maximize some stability properties. We generated a number of methods that satisfy the order conditions
  and the error inhibiting condition   \eqref{con1} or all the error inhibiting conditions \eqref{con1}--\eqref{con3}. 
  The coefficients of all the methods can be downloaded on our GitHub repository \cite{EISpp_MD_github}.  The explicit methods found by the optimizer
  either have favorable regions of linear stability or are SSP in the sense of \cite{MSMD-TS}. The implicit methods
  are A-stable. We test a selection of these methods for convergence, and show that they 
  demonstrate the predicted order before and after post-processing on
a nonlinear system of ordinary differential equations.
  We test the SSP methods for the preservation of the  total variation diminishing properties on a PDE with discontinuities, and show
  that they produce TVD results beyond the guaranteed SSP time-step, and that furthermore post-processing does not adversely 
  affect the total variation. These results validate the methods we found, as well as the theory that we developed.

  \appendix
  
  \section{Coefficients of some methods}
\label{appendix:coef}
In this appendix we list the coefficients of some of the  methods mentioned in Section 4.1.
The coefficients for all the methods in this paper can be downloaded from our GitHub repository \cite{EISpp_MD_github}.

\smallskip

\noindent{ \bf Explicit two-stage third-order EIS method (eEIS(2,3)$_2$):} 
\[ \mD_{1,i} =  1.347635863512091, \; \; \; 
\mD_{2,i} = -0.347635863512091,\]
\[ \mA = \left( \begin{array}{cc}
   1.110588320380528   & 0.206278390370703 \\
   1.160801319467423   & 0.191968442856969 \\
 \end{array}\right), \]
\[\mAh = \left( \begin{array}{cc}
   0.376508598017949   & 0.079881117612918 \\
   0.424704932282709   & 0.083778591655645 \\
 \end{array}\right), \]
\[ \mR = \left( \begin{array}{cc}
                   0         &          0 \\
   0.875587228946215        &           0 \\
 \end{array}\right), \; \; \; 
 \mRh = \left( \begin{array}{cc}
                   0         &          0 \\
   0.412259887079832          &         0 \\
 \end{array}\right), \]

\noindent{ \bf Explicit two-stage fifth-order EIS+ method (eEIS+(2,5)$_2$):}
\[ \mD_{1,i} =  0.500023658051142 , \; \; \; 
\mD_{2,i} =    0.499976341948858 , \]
\[ \mA = \left( \begin{array}{cc}
   0.627069692131650 &  0.151022064558538 \\
   0.709712162750524  & 0.848963643214302 \\
\end{array}\right), \]
\[ \mAh = \left( \begin{array}{cc}
   0.058142153689242 &  0.325582994094698 \\
   0.108273930132603  & 0.477624731406111 \\
 \end{array}\right), \]
\[ \mR = \left( \begin{array}{cc}
             0        &           0 \\
  -0.336746561995068       &            0 \\
 \end{array}\right), \; \; \; 
\mRh = \left( \begin{array}{cc}
                   0          &         0 \\
   0.367133756538675         &          0 \\
 \end{array}\right).\]
    The truncation error vector (needed for postprocessing) is 
\[ \ste_4 = (-0.039533847641586  ,   0.039537588993770 )^T.\]

\noindent{ \bf Explicit two-stage sixth-order EIS+ method (eEIS+(2,6)$_2$):}
\[ \mD_{1,i} =  0.193021555206000, \; \; \; 
\mD_{2,i} =    0.806978444794000 , \]
\[ \mA = \left( \begin{array}{cc}
   1.089589263420254  & -0.469532861646008 \\
   1.011690204056872   & 1.112307786855907 \\
    \end{array}\right)  \]
\[\mAh = \left( \begin{array}{cc}
   0.196914195858807 &   0.434709438834146 \\
   0.130811273979010  & 0.871687677021200 \\
 \end{array}\right), \]
\[ \mR = \left( \begin{array}{cc}
                   0          &         0 \\
  -1.033119102271808        &           0 \\
\end{array}\right), \; \; \; 
\mRh = \left( \begin{array}{cc}
                   0        &           0 \\
   0.499137031946415            &       0 \\
 \end{array}\right).\]
    The truncation error vector (needed for postprocessing) is 
\[ \ste_5= ( -0.037857689452761 , 0.009055198613815)^T. \]

\noindent{ \bf Explicit three-stage seventh-order EIS+ method  (eEIS+(3,7)$_2$):}

\[ \mD_{1,i} = 1.581021525561460  , \; 
\mD_{2,i} = -0.598751979308602   , \; 
\mD_{3,i} =0.017730453747142,\]
\[ \mA = \left( \begin{array}{ccc}
   0.931591460185742   & 0.379244369981835  & -0.172141957956410 \\
   0.938547162180577   & 0.508131122095280  & -0.363857858559788 \\
   0.504648760586788   & 1.046850936001111  & -0.659275924405796 \\
 \end{array}\right), \]
\[ \mAh = \left( \begin{array}{ccc}
   0.057154143906362   & 0.302522642478094   & 0.175689200743141 \\
   0.045099335357263   & 0.359020777972142   & 0.164798140168151 \\
  -0.060217523878309   & 0.456569929293375  & -0.005615338892051 \\
 \end{array}\right), \]
\[ \mR = \left( \begin{array}{ccc}
               0        &           0             &      0 \\
   0.307438691150295         &          0          &          0 \\
   1.789973573982305  & -0.870575633439973           &         0 \\
 \end{array}\right), \]
\[ \mRh = \left( \begin{array}{ccc} 
            0       &            0        &           0 \\
   0.038804362951013         &          0           &        0 \\
   0.227157707727078   & 0.276283023303938      &             0 \\
       \end{array}\right). \]
          The truncation error vector (needed for postprocessing) is 
\[ \ste_6 =  ( -0.003599790543666,  -0.012406980352919,   -0.097987210664809)^T.\]

\smallskip
  
\noindent{ \bf Explicit four-stage eighth-order EIS+ method (eEIS+(4,8)$_2$):}
\[ \mD_{1,i} =     1.126765222628176, \; \; \;
\mD_{2,i} =   0.808129178515260, \]
\[  \mD_{3,i} =  -0.107647150078402, \; \; \;
\mD_{4,i} = -0.827247251065033 ,\]
{\footnotesize
\[ \mA = \left( \begin{array}{cccc}
  0.567574025309926   & 0.723999455772069   & 0.208196137734782   & 0.023532165559543 \\
   0.749691669482323   & 0.430151531239573  &  0.359568096205409  & -0.030974711893773 \\
   0.602555996794216   & 0.745759221902972  & 0.048559187429251  & -0.267889537378177 \\
   1.051588361923041  & -0.047355340428569  & 0.863960642835203   & 0.214102220881218 \\
 \end{array}\right), \]
 \[ \mAh = \left( \begin{array}{cccc}
   0.041975696597772  &  0.205746598967380   & 0.137652258393657   & 0.039122406247340 \\
   0.064927843091523  & 0.213465637934016    & 0.160720650985361   & -0.047428374982532 \\
   0.056975020786010  &  0.171669459177575   & 0.226994033551341   & -0.021617692260293 \\
   0.095018403341495  &  0.263066907087928   & 0.147903147440657   & -0.036525606967693 \\
 \end{array}\right), \]}
 \[ \mR = \left( \begin{array}{cccc}
                   0           &        0           &        0            &        0 \\
   0.296825313241825         &          0          &          0        &           0 \\
   0.379857836431130  &  0.610459020171445   &                0       &             0 \\
   0.079086170545983  &  0.114409044614819   & 0.077980998192235    &               0 \\
 \end{array}\right), \]
  \[ \mRh = \left( \begin{array}{cccc}
                   0        &           0       &             0        &            0 \\
   0.095598816350501          &         0           &        0           &        0 \\
  -0.143446089841412  &  0.076113483149991        &           0        &           0 \\ 
   0.309290513515929  &  0.063106409144583   & 0.076129207423402        &            0 \\
 \end{array}\right). \]
   The truncation error vector (needed for postprocessing) is 
  \[ \ste_7=  \left( \begin{array}{c}
   -0.000997109517747 \\
  -0.006485724807936 \\ 
  -0.023117224006582 \\
  -0.004685791946531 \\
 \end{array}\right) . \]

\vspace*{0.75in}

{\bf Acknowledgment.} 
This publication is based on work supported by  AFOSR grant FA9550-18-1-0383 and ONR-DURIP Grant N00014-18-1-2255. A part of this research is sponsored by the Office of Advanced Scientific Computing Research; US Department of Energy, and was performed at the Oak Ridge National Laboratory, which is managed by UT-Battelle, LLC under Contract no. De-AC05-00OR22725. This manuscript has been authored by UT-Battelle, LLC, under contract DE-AC05-00OR22725 with the US Department of Energy. The United States Government retains and the publisher, by accepting the article for publication, acknowledges that the United States Government retains a non-exclusive, paid-up, irrevocable, world-wide license to publish or reproduce the published form of this manuscript, or allow others to do so, for United States Government purposes.

\newpage

\end{document}